\def\Z{{\mathbb Z}}
\def\C{{\mathbb C}}
\def\R{{\mathbb R}}
\def\acts{\triangleright}
\def\T{{\mathbb T}}
\def\Q{{\mathbb Q}}
\def\CA{{\mathcal A}}
\def\qq{{\quad}}
\DeclareMathSymbol\crossrt{\mathrel}{AMSb}{"6E}
\DeclareMathSymbol\crosslt{\mathrel}{AMSb}{"6F}
\def\lcross{\crossrt}
\def\lcross{\crosslt}
\def\id{{\mathrm i}{\mathrm d}}
\def\ts{\otimes}
\def\oh{\frac{1}{2}}
\def\soh{\hbox{\small $\frac{1}{2}$}}
\def\im{\hbox{Im\ }}
\def\ker{\hbox{ker\ }}
\def\cop{\Delta}
\def\acts{\triangleright}
\def\ts{\otimes}
\def\im{\hbox{Im\ }}
\def\ker{\hbox{ker\ }}
\newtheorem{lemma}{Lemma}[section]
\newtheorem{theorem}[lemma]{Theorem}
\newtheorem{proposition}[lemma]{Proposition}
\newtheorem{corollary}[lemma]{Corollary}
\newtheorem{definition}[lemma]{Definition}
\newtheorem{remark}[lemma]{Remark}
\newcommand{\be}{\begin{equation}}
\newcommand{\ee}{\end{equation}}
\newcommand{\bs}{\begin{frame}}
\newcommand{\esl}{\end{frame}}
\newcommand{\bea}{\begin{eqnarray}}
\newcommand{\eea}{\end{eqnarray}}
\definecolor{lgray}{rgb}{0.2,0.2,0.2}
\definecolor{Light}{rgb}{0.8,0.8,0}
\definecolor{Brown}{cmyk}{0, 0.8, 1, 0.6}
\definecolor{Yellow}{rgb}{1, 1, 0}
\definecolor{Dark}{gray}{.20}
\definecolor{lgray}{rgb}{0.2,0.2,0.2}
\definecolor{shadecolor}{named}{GreenYellow} 
\title{$K$-theory of noncommutative \\ Bieberbach manifolds.}
\author{
\vbox{
\small
\begin{center} 
\large 
Piotr Olczykowski \thanks{supported by the grant from The John Templeton Foundation} \\
{\em and}  \\
Andrzej Sitarz 
\thanks{Partially supported by MNII grant 189/6.PRUE/2007/7  and NCN grant 2011/01/B/ST1/06474} \\
%% ${}^\ast$ 
\ \\
Institute of Physics, Jagiellonian University,\\
Reymonta 4, 30-059 Krak\'ow, Poland \\
%${}^\dagger$
%${}^\ast$
\end{center} 
}}
\begin{document}
\maketitle
\begin{abstract}
We compute $K$-theory of noncommutative Bieberbach manifolds, which quotients of 
a three-dimensional noncommutative torus by a free action of a cyclic group $\Z_N$, 
$N=2,3,4,6$.
\end{abstract}

\section{Introduction}

Bieberbach manifolds are compact manifolds, which are quotients of the Euclidean space by 
a free, properly discontinuous and isometric action of a discrete group. The classification 
of all Bieberbach manifolds is a complex problem in itself (see \cite{Sad} and the references 
therein). The first nontrivial low-dimensional examples appear in dimension three and
have been already described in the seminal works of Biebierbach \cite{Bieb1,Bieb2}.

In the paper we work with the $C^\ast$ algebras of continuous functions on the three-torus 
and the corresponding noncommutative deformation $C(\T^3_\theta)$. 

\subsection{Three-dimensional Bieberbach Manifolds}

In this section we shall briefly recall the description of three-dimensional Bieberbach manifolds 
as quotients of the three-dimensional tori by the action of a finite discrete group. We use the 
algebraic language, taking the algebra of the polynomial functions on the three-torus $\T^3$ 
generated by three mutually commuting unitaries $U,V,W$. This algebra could be then completed 
first to the algebra of smooth functions on the torus $C^\infty(\T^3)$ and later to a 
$C^\ast$-algebra of continuous functions $C(\T^3)$.

There are, in total, 10 different Bieberbach three-dimensional manifolds, six orientable (including the 
three-torus itself) and four nonorientable ones. This follows directly from the classification of Bieberbach groups of $\R^3$ out of which six do not change orientation and four change the orientation. The action of the finite groups on the unitaries $U,V,W$, which generate the algebra of continuous functions on the three-torus is presented in the table below and comes directly from the action of Bieberbach groups on $\R^3$.  
%%%%%%%%%%%%%%%%%%%%%%%%%%%%%%%%%%%%%%%%%%%%%%%%%%%%
\begin{table}[here]
\centering
\begin{tabular}{|c|c|c|c|}
\hline
name & group $G$ & generators of $G$ & action of $G$ on $U,V,W$  \\ \hline \hline
B2& $\Z_2$ & e & $e \acts U = -U$, $e \acts V = V^*$, $e \acts W = W^*$ \\ \hline
B3& $\Z_3$ & e & $e \acts U =  e^{\frac{2}{3}\pi i} U$, $e \acts V = W^*$, $e \acts W = W^* V$ \\ \hline
B4& $\Z_4$ & e & $e \acts U =  i U$, $e \acts V = W$, $e \acts W = V^*$ \\ \hline
B5& $\Z_2 \times \Z_2$ & $e_1,e_2$ & $e_1 \acts U =  -U$, $e_1 \acts V = V^*$, $e_1 \acts W = W^*$ \\
& & & $e_2 \acts U =  U^*$, $e_2 \acts V = -V$, $e_2 \acts W = -W^*$ \\ \hline
B6& $\Z_6$ & e & $e \acts U =  e^{\frac{1}{3}\pi i} U$, $e \acts V = W$ , $e \acts W = W V^*$ \\ \hline
\end{tabular}
\caption{Orientable actions of finite groups on three-torus}
\label{act_o}
\end{table}
%%%%%%%%%%%%%%%%%%%%%%%%%%%%%%%%%%%%%%%%%%%%%%%%%%%%%%%%%%%%%%%%%%%%%%%%%%%%%%%%%%%%%%%%%%%%%%
The above actions give rise to five oriented flat three-manifolds different from the torus. 
The remaining four nonorientable quotients, originate from the following actions:
%%%%%%%%%%%%%%%%%%%%%%%%%%%%%%%%%%%%%%%%%%%%%%%%%%%%%%%%%%%%%%%%%%%%%%%%%%%%%%%%%%%%%%%%%%%%%%
\begin{table}[here]
\centering
\begin{tabular}{|c|c|c|c|}
\hline
name & group $G$ & generators of $G$ & action of $G$ on $U,V,W$, \\ \hline \hline
N1& $\Z_2$ & $e$ & $e \acts U = -U$, $e \acts V = V$, $e \acts W = W^*$ \\ \hline
N2& $\Z_2$ & $e$ & $e \acts U = -U$, $e \acts V = V W$, $e_1 \acts W = W^*$ \\ \hline
N3& $\Z_2 \times \Z_2$ & $e_1,e_2$ & $e_1 \acts U = -U$, $e_1 \acts V = V^*$,   $e_1 \acts W = W^*$ \\ 
  &        &         &               $e_2 \acts U =  U$, $e_2 \acts V =-V    $, $e_2 \acts W = W^*  $ \\ \hline
N4& $\Z_2 \times \Z_2$ & $e_1,e_2$ & $e_1 \acts U = -U$, $e_1 \acts V = V^*$,   $e_1 \acts W = W^*$ \\ 
  &        &         &               $e_2 \acts U =  U$, $e_2 \acts V =-V    $, $e_2 \acts W = -W^*  $ \\ \hline
\end{tabular}
\caption{Nonorientable action of finite groups on three-torus}
\label{act_no}
\end{table}

For full details and classifications of all free actions of finite groups on three-torus see \cite{HJKL93,HJKL02}, note, however, that the resulting quotient manifolds are always 
one of the above Bieberbach manifolds.
%%%%%%%%%%%%%%%%%%%%%%%%%%%%%%%%%%%%%%%%%%%%%%%%%%%%%%%%%%%%%%%%%%%%%%%%%%%%%%%%%%%%%%%%%%%%%%
It is easy to see that ${\mathrm N1}$ is just the Cartesian product of $S^1$ with the Klein bottle, whereas ${\mathrm N3}$ and ${\mathrm N4}$ are two distinct $\Z_2$ quotients of ${\mathrm B2}$. 
%%%%%%%%%%%%%%%%%%%%%%%%%%%%%%%%%%%%%%%%%%%%%%%%%%%%%%%%%%%%%%%%%%%%%%%%%%%%%%%%%%%%%%%%%%%%%%
\section{Noncommutative Bieberbach manifolds}

Since a convenient description of Bieberbach manifolds is as quotients of three-torus by action 
of finite groups we can ask whether starting from a noncommutative three-dimensional torus we 
can obtain interesting examples of nontrivial flat noncommutative manifolds. The most general
noncommutative three-torus can be realized as a twisted group algebra 
$C^*(\Z^3,\omega_\theta)$ with the cocycle over $\Z^3$:
$$ \omega_\theta(\vec{m}, \vec{n}) = e^{\pi i \sum_{j,k =1}^3 \theta_{jk} m_j n_k}, \quad
\vec{m},\vec{n} \in \Z^3, $$
where $\theta_{jk}$ is a real antisymmetric matrix ($0{\leq}\theta_{jk}<1$). 
Taking the canonical basis of $\Z^3$, $\vec{e}_1,\vec{e}_2,\vec{e}_3$, we 
can identify $U,V,W$ with $\delta_{\vec{e}_1}, \delta_{\vec{e}_2},\delta_{\vec{e}_2}$ 
in the convolution
 algebra $C^*(\Z^3)$. We denote the product in the twisted convolution
algebra by $*_\omega$, and on the generators we have
$$ \delta_{\vec{e}_i} *_{\omega_\theta} \delta_{\vec{e}_j} = 
   \omega_\theta(\vec{e}_i,\vec{e}_j) \delta_{\vec{e}_i} \delta_{\vec{e}_j}. $$

Next, we shall find all possible values of the matrix $\theta_{jk}$ such that the actions of 
the finite group $G$ (as discussed earlier) are compatible with the cocycle. As the action 
of $G$ is not an action on $\Z^3$ then the cocycle cannot be in fact invariant. However, we 
might define the compatibility with the action of $G$ in the following way. We say that the
action of the finite group $G$ is compatible with the cocycle $\omega_\theta$ if:
$$ g \acts (a *_{\omega_\theta} b) = (g \acts a) *_{\omega_\theta} (g \acts b), 
\quad \forall a,b \in C^*(\Z^3), g \in G.$$ 
We have:
\begin{lemma}
The cocycle $\omega_\theta$ is compatible with the actions of group $G$, given in the tables 
\ref{act_o} and \ref{act_no} if $0 \leq \theta_{jk} < 1$ are as follows:

\begin{table}[here]
\centering
\begin{tabular}{|c|c|c|c|c|}
\hline
group $G$ & $\theta_{12}$ & $\theta_{13}$
 & $\theta_{23}$ & conditions \\ \hline \hline
$\Z_2$ & $\frac{k}{2}$ & $\frac{l}{2}$   & $\theta$ & $k,l=0,1,$   \\ \hline
$\Z_3$ & $\frac{k}{3}$ & $\frac{3-k}{3}$ & $\theta$ & $k=0,1,2,$ \\ \hline
$\Z_4$ & $\frac{k}{2}$ & $\frac{k}{2}$   & $\theta$ & $k=0,1,$   \\ \hline
$\Z_2 \times \Z_2$ 
       & $\frac{k}{2}$ & $\frac{l}{2}$ & $\frac{m}{2}$ & $k,l,m=0,1$ \\ \hline
$\Z_6$ & $\frac{k}{3}$ & $\frac{3-k}{3}$ & $\theta$ & $k=0,1,2,$ \\ \hline 
\end{tabular}
\caption{Values of $\theta_{jk}$ for compatible cocycles for orientable actions}
\label{coc_o}
\end{table}
%%%%%%%%%%%%%%%%%%%%%%%%%%%%%%%%%%%%%%%%%%%%%%%%%%%%%%%%%%%%%%%%%%%%%%%%%%%%%%%%%%%%%%%%%%%%%%%%%%
\begin{table}[here]
\centering
\begin{tabular}{|c|c|c|c|c|}
\hline
group $G$ & $\theta_{12}$ & $\theta_{13}$ & $\theta_{23}$ & conditions \\ \hline \hline
$\Z_2$ & $\theta$ & $\frac{k}{2}$ & $\frac{l}{2}$  & $k,l=0,1$,   \\ \hline
$\Z_2$ 
       & $\theta$ & $\frac{k}{2}$ & $\frac{l}{2}$ &$k,l=0,1$ \\ \hline
$\Z_2 \times \Z_2$ 
       & $\frac{k}{2}$ & $\frac{l}{2}$ & $\frac{m}{2}$ & $k,l,m=0,1$ \\ \hline
$\Z_2 \times \Z_2$ 
       & $\frac{k}{2}$ & $\frac{l}{2}$ & $\frac{m}{2}$ & $k,l,m=0,1$ \\ \hline
\end{tabular}
\caption{Values of $\theta_{jk}$ for compatible cocycles for nonorientable actions}
\label{coc_no}
\end{table}
\end{lemma}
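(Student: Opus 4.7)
The plan is to translate the compatibility condition into algebraic identities on the three generators $U, V, W$. Since the action of $G$ is prescribed on these generators and the twisted convolution algebra $C^*(\Z^3, \omega_\theta)$ is presented by $U, V, W$ modulo the relations
$$UV = e^{2\pi i \theta_{12}} VU, \qquad UW = e^{2\pi i \theta_{13}} WU, \qquad VW = e^{2\pi i \theta_{23}} WV,$$
the compatibility of $g \acts$ with $*_{\omega_\theta}$ is equivalent to the requirement that these three relations are preserved after applying $g$ to both sides. Thus for each row of Tables \ref{act_o} and \ref{act_no} I would substitute the formulas for $g \acts U$, $g \acts V$, $g \acts W$ into each of the three commutation relations, rewrite both sides into the canonical form (phase)$\cdot$(ordered monomial) using the original relations, and read off the congruences on $\theta_{12}, \theta_{13}, \theta_{23}$ modulo $\Z$ which guarantee equality.

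As a representative example, for the B3 action of $\Z_3$, with $e \acts U = e^{2\pi i/3} U$, $e \acts V = W^*$, $e \acts W = W^* V$, the relation $VW = e^{2\pi i \theta_{23}} WV$ is preserved automatically since both sides transform only among $V$'s and $W$'s; the relation $UV = e^{2\pi i \theta_{12}} VU$ reduces to $\theta_{12} + \theta_{13} \in \Z$; and $UW = e^{2\pi i \theta_{13}} WU$ reduces to $\theta_{12} - 2\theta_{13} \in \Z$. Intersecting these conditions with $0 \leq \theta_{jk} < 1$ yields exactly the pairs $(\theta_{12}, \theta_{13}) \in \{(0,0), (\tfrac{1}{3}, \tfrac{2}{3}), (\tfrac{2}{3}, \tfrac{1}{3})\}$, with $\theta_{23}$ free, reproducing the B3 entries of Table \ref{coc_o}.

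The B4 and B6 rows are handled by the same reduction; the B6 conditions follow from those for B3 since the order-three subgroup of $\Z_6$ acts exactly as in B3, and the remaining generator of $\Z_6$ only imposes the redundant order-two consequence. For the seven rows whose action is built only from sign changes and swaps of $V, W$ (possibly combined with adjoints), each of the three relations collapses into an equality of the form $\theta_{jk} \equiv \pm\theta_{jk} \pmod{\Z}$ or $\theta_{jk} \equiv \theta_{j'k'} \pmod{\Z}$, so the relevant entries are forced to lie in $\tfrac{1}{2}\Z$ and hence equal $0$ or $\tfrac{1}{2}$ within $[0,1)$. For the $\Z_2 \times \Z_2$ rows the conditions from the two generators must both be imposed and they combine transparently to fix all three $\theta_{jk}$ independently to half-integers.

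The main obstacle is simply the bookkeeping across the nine cases; conceptually every case reduces to the same type of word manipulation illustrated in the B3 example, and no individual verification is harder than rewriting a word of length at most three in $U, V, W$ and their adjoints using the twisted commutation relations.
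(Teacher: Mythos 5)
The paper gives no proof of this lemma at all, so there is nothing to compare against; your strategy --- reduce compatibility to preservation of the three commutation relations $UV=e^{2\pi i\theta_{12}}VU$, $UW=e^{2\pi i\theta_{13}}WU$, $VW=e^{2\pi i\theta_{23}}WV$ and solve the resulting congruences $\bmod\ \Z$ row by row --- is the natural one, and your B3 computation is correct. (One small caveat: if $g\acts$ is the geometrically induced map $\delta_{\vec n}\mapsto\chi_g(\vec n)\delta_{A_g\vec n}$ on all of $C^*(\Z^3)$, then since $\chi_g$ is a character the compatibility condition is $\omega_\theta(A_g\vec m,A_g\vec n)=\omega_\theta(\vec m,\vec n)$, i.e.\ a congruence $\bmod\ 2\Z$ rather than $\bmod\ \Z$; your equivalence holds only if one \emph{redefines} the action as the multiplicative extension from the generators. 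The table is only consistent with the weaker, mod-$\Z$ reading, so this is evidently what is intended, but you should say so.)

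There is, however, a genuine error in your B6 step. Writing $v=(\theta_{12},\theta_{13})$ and letting $B\in SL(2,\Z)$ be the matrix of the action on the exponents of $V,W$, your two $U$-relations amount to $(B^T-I)v\in\Z^2$. For the order-$6$ matrix $B=\left(\begin{smallmatrix}0&-1\\1&1\end{smallmatrix}\right)$ of the B6 row one has $\det(B^T-I)=1-\operatorname{tr}B+\det B=1$, so the only solution is $v\equiv(0,0)$; equivalently, the order-$2$ element $e^3$ acts by $-I$ and forces $2v\in\Z^2$, which intersected with the B3 solutions $3\theta_{13}\in\Z$, $\theta_{12}\equiv-\theta_{13}$ leaves only $v=0$. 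Your claim that the order-two consequence is ``redundant'' is therefore false, and a correct execution of your own method yields $\theta_{12}=\theta_{13}=0$ for $\Z_6$, not the $k/3$ family in the table (the table's $\Z_6$ row appears to be a copy of the $\Z_3$ row; note the paper's later Definition only ever uses $\theta_{12}=\theta_{13}=0$). A similar problem occurs for N2, which is not a ``sign change and swap'': $e\acts V=VW$ turns the relation $UV=e^{2\pi i\theta_{12}}VU$ into $U(VW)=e^{2\pi i\theta_{12}}(VW)U$, which forces $\theta_{13}\equiv0\pmod\Z$, not merely $2\theta_{13}\in\Z$. So your proof cannot ``reproduce the table'' as claimed; the honest conclusion of your method is that two rows of the stated tables need correction.
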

As we are interested in the genuine noncommutative case, where the three-dimensional 
torus has at least one irrational rotation subalgebra (that is, at least one of the 
independent entries of the matrix $\theta_{jk}$ is irrational), we see that we might 
obtain only $4$ nontrivial orientable noncommutative Bieberbach manifolds and two 
nonorientable ones. 

\begin{definition}
Let $C(\T^3_\theta)$, be a twisted group algebra over $\Z^3$ corresponding to a cocycle 
obtained from $\theta_{12}=\theta_{21}=0$ and $\theta_{23}=-\theta$ for an irrational 
$0 < \theta < 1$. Then the generating unitaries $U,V,W$ satisfy relations:
$$ UV=VU, \quad UW=WU, \quad WV=e^{2\pi i\theta}VW. $$

We define the algebras of noncommutative Bieberbach manifolds as the fixed point 
algebras of the following actions of finite groups $G$ on $C(\T^3_\theta)$ (note that for
$\mathrm{N1}_\theta$ and $\mathrm{N1}_\theta$ we need to relabel the generators:  $\{U,V,W\} \to \{W, U, V \}$ so that always the $V$ and $W$ are from the irrational rotation subalgebra), which
are combine in the table \ref{table1}. For convenience and to match the notation of other 
papers we rescaled the generators $V,W$ in the case of $\Z_3$ and $\Z_6$ actions, 
also, for $\Z_3$ we take the other generator of the $\Z_3$ action. 
%%%%%%%%%%%%%%%%%%%%%%%%%%%%%%%%%%%%%%%%%%%%
%%%%%%%%%%%%%%%%%%%%%%%%%%%%%%%%%%%%%%%%%%%%
\begin{table}[here]
\centering
\begin{tabular}{|c|c|c|c|l|}
\hline
name & group $\Z_N$ & $e^N=\id$ & action of $\Z_N$ on $U,V,W$  \\ \hline \hline
$\mathrm{B2}_\theta$& $\Z_2$ & e & $e \acts U = -U$, $e \acts V = V^*$, $e \acts W = W^*$, \\ \hline
$\mathrm{B3}_\theta$& $\Z_3$ & e & $e \acts U =  e^{\frac{2}{3} \pi i} U$, $e \acts V = e^{-\pi i \theta} V^* W$, 
                   $e \acts W = V^*$, \\ \hline
$\mathrm{B4}_\theta$& $\Z_4$ & e & $e \acts U =  i U$, $e \acts V = W$, $e \acts W = V^*$, \\ \hline
$\mathrm{B6}_\theta$& $\Z_6$ & e & $e \acts U =  e^{\frac{1}{3}\pi i} U$, $e \acts V = W$ , $e \acts W = e^{-\pi i \theta} V^*W$, \\ \hline
$\mathrm{N1}_\theta$& $\Z_2$ & e & $e \acts U = U^*$, $e \acts V = -V$, $e \acts W = W$, \\ \hline
$\mathrm{N2}_\theta$& $\Z_2$ & e & $e \acts U = U^*$, $e \acts V = -V $, $e \acts W = W U^*$, \\  \hline
\end{tabular}
\caption{The action of finite cyclic groups on $C(T^3_\theta)$}
\label{table1}
\end{table}
\end{definition}

\begin{proposition}
The actions of the cyclic groups $\Z_N$, $N=2,3,4,6$ on the noncommutative 
three-torus, as given in the table \ref{table1} is free.
\end{proposition}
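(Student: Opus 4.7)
The plan is to verify freeness via the Ellwood/principal-bundle condition: a finite-group action $\alpha$ of $G$ on a unital $C^*$-algebra $A$ is free iff for every $g\in G$ there exist finite families $\{a_i^{(g)},b_i^{(g)}\}\subset A$ with $\sum_i a_i^{(g)}\alpha_h(b_i^{(g)})=\delta_{g,h}\,1_A$ for every $h\in G$. When $G=\Z_N$ is cyclic, it suffices to exhibit a single unitary $X\in A$ with $\alpha_e(X)=\zeta X$ for a primitive $N$-th root of unity $\zeta$; powers of $X$ and $X^*$ then produce an explicit strong connection by a one-line discrete Fourier calculation, since $\sum_{k=0}^{N-1}\zeta^{(h-g)k}=N\delta_{g,h}$ and $(X^*)^kX^k=1$.

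First I would handle the four orientable cases $\mathrm{B2}_\theta,\mathrm{B3}_\theta,\mathrm{B4}_\theta,\mathrm{B6}_\theta$ uniformly. Reading Table~\ref{table1}, the generator $U$ satisfies $\alpha_e(U)=\zeta_N U$ with $\zeta_N\in\{-1,\,e^{2\pi i/3},\,i,\,e^{\pi i/3}\}$ respectively -- a primitive $N$-th root of unity in each case -- so the single choice $X=U$ verifies the criterion for all four algebras simultaneously, as $\zeta_N^k\neq 1$ for every $1\le k\le N-1$.

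For the nonorientable cases $\mathrm{N1}_\theta$ and $\mathrm{N2}_\theta$ the action sends $U\mapsto U^*$, which is not of scalar-eigenvector form, so $U$ is no longer a candidate. However, Table~\ref{table1} records $\alpha_e(V)=-V$ in both cases, and $X=V$ with $\zeta=-1$ does the job for the $\Z_2$ action. The only real ``task'' is therefore the eigenvalue inspection of Table~\ref{table1}; since the actions there have been preselected in a normal form in which one of the three generating unitaries is carried to a scalar multiple of itself by a generator of the cyclic group, I do not anticipate any genuine obstacle -- the proof is essentially bookkeeping.
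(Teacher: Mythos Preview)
Your proposal is correct and takes essentially the same approach as the paper: both verify freeness via the Ellwood/Hopf--Galois principality condition by observing that one of the torus generators is a unitary eigenvector for the $\Z_N$ action with eigenvalue a primitive $N$-th root of unity ($U$ for $\mathrm{B2}_\theta,\ldots,\mathrm{B6}_\theta$, and $V$ for $\mathrm{N1}_\theta,\mathrm{N2}_\theta$). The paper phrases this through the coaction $\Delta U=U\otimes\tilde e$ in $A\otimes\C(\Z_N)$ and density, while you spell out the discrete-Fourier witness $\sum_k (X^*)^k\alpha_h(X^k)=N\delta_{g,h}$ explicitly, but the underlying observation is identical.
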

\begin{proof}
Let us recall that that the freeness of a coaction of a Hopf algebra $H$ on a $C^*$-algebra  $A$ means (for a right coaction) that the spans of $ (a \ts \id) \cop (b) $  and $ \cop(b) (a \ts \id)$, $a,b \in A$ are dense in $A \ts H$ for a minimal tensor product. 

In our case, the freeness of the action is easy to verify. In the B2,B3,B4,B6 case the coaction of the dual Hopf algebra to the group algebra $\Z_N$ is simply: $\cop U = U \ts \tilde{e}$, where $\tilde{e}$ is the generator
of $\C(\Z_N)$. Since $U$ (and its powers) are invertible, it is evident that $(a \ts \id) \cop(U^n)$ and
 $\cop(U^n) (a \ts \id)$ are dense in $A \ts \C(\Z_N)$. In the ${\mathrm N1}_\theta, {\mathrm N2}_\theta$ case the same argument applies when we take $V$ instead.
\end{proof}

%%%%%%%%%%%%%%%%%%%%%%%%%%%%%%%%%%%%%%%%%%%%%%%%%%%%%%%%%%%%%%%%%%%%%%%%%%%%%%%%%%%%%%
The purpose of this paper is to compute K-theory of noncommutative Bieberbach algebras 
$\mathrm{B2}_\theta,\mathrm{B3}_\theta,\mathrm{B4}_\theta,\mathrm{B6}_\theta$. 

\section{K-theory for Noncommutative Bieberbach manifolds} 
 
We present here the computation of the K-theory groups from the Bieberbach manifolds obtained first by 
the action of the cyclic group $\Z_N$, $N=2,3,4,6$, leaving the $\Z_2 \times \Z_2$ case aside for future considerations. 

As in our case the action of cyclic group fulfills the assumptions of Takai duality \cite{KiTa,Taka} the fixed point algebra is Morita equivalent to the crossed product algebra $C(\T^3_\theta) \rtimes \Z_N$,  $N=2,3,4,6$. In fact, it is easy to show an explicit isomorphism between
$C(T^3_\theta)^{\Z_n} \otimes M_N(\C)$ and $C(\T^3_\theta) \lcross \Z_N$, which we 
present explicitly.

\begin{lemma}
Consider the action of a finite group $\Z_N$ on $C(\T^3_\theta)$ determined by the action
of the generator $p$:
$$ p \acts U = \lambda U, \qq\qq p \acts V = \alpha(V), \qq\qq p \acts W = \alpha(W), $$
where $U$ is central, $\lambda = e^{\frac{2\pi i}{N}}$ and $\alpha$ is an automorphism of the rotation algebra over generators $V$ and $W$ (with $\theta$ not necessarily irrational).  
The crossed product algebra  $C(\T^3_\theta) \lcross \Z_N$ is generated by $U,V,W$ and $p$ with relations:
$$ pU = \lambda Up, \qq pV =\alpha(V)p, \qq pW=\alpha(W)p, \qq p^N=1. $$
is then canonically isomorphic to  $C(T^3_\theta)^{\Z_n} \otimes M_N(\C)$.
\end{lemma}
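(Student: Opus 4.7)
The plan is to produce an explicit system of $N\times N$ matrix units $\{W_{ij}\}$ inside the crossed product $A\rtimes\Z_N$ (writing $A=C(\T^3_\theta)$ for brevity) whose relative commutant is exactly the image of the fixed-point algebra $A^{\Z_N}$. The canonical map $A^{\Z_N}\otimes M_N(\C)\to A\rtimes\Z_N$ sending $a\otimes e_{ij}\mapsto aW_{ij}$ will then be the desired isomorphism. The construction rests on the fact that the central unitary $U$ carries the full regular character of the $\Z_N$-action: the relation $pU=\lambda Up$ lets powers of $U$ intertwine the spectral subspaces of $p$.

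I would first form the spectral projections
$$ f_i \;=\; \frac{1}{N}\sum_{k=0}^{N-1}\lambda^{-ki}\,p^k,\qquad i=0,\dots,N-1,$$
which by orthogonality of characters of $\Z_N$ are mutually orthogonal projections summing to $1$, with Fourier inversion $p^k=\sum_i\lambda^{ki}f_i$. The relation $p^kU=\lambda^kUp^k$ translates into $Uf_i=f_{i+1}U$ (indices mod $N$). Setting
$$ W_{ij}\;=\;U^{i-j}f_j,$$
one checks from $f_jU^m=U^mf_{j-m}$ that $W_{ij}^*=W_{ji}$ and $W_{ij}W_{kl}=\delta_{jk}W_{il}$, so the $W_{ij}$ generate a copy of $M_N(\C)$ inside $A\rtimes\Z_N$.

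Every $a\in A^{\Z_N}$ commutes with each $p^k$ (by invariance) and with each power of $U$ (because $U$ is central in $A$), hence with every $W_{ij}$. Thus $\phi(a\otimes e_{ij})=aW_{ij}$ defines a $*$-homomorphism $\phi\colon A^{\Z_N}\otimes M_N(\C)\to A\rtimes\Z_N$. Surjectivity follows by decomposing any $a\in A$ into $p$-isotypic components $a=\sum_m a_m$ with $p\acts a_m=\lambda^{-m}a_m$; then $b_m:=U^m a_m$ lies in $A^{\Z_N}$, and combining $p^k=\sum_i\lambda^{ki}f_i$ with $U^{-m}f_i=W_{i-m,\,i}$ rewrites every monomial $a\,p^k$ as a linear combination of $\phi$-images. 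For injectivity, suppose $\sum x_{ij}W_{ij}=0$ with $x_{ij}\in A^{\Z_N}$; sandwiching by matrix units gives $W_{ba}\bigl(\textstyle\sum x_{ij}W_{ij}\bigr)W_{cd}=x_{ac}W_{bd}$, and taking $b=d$ and summing over $b$ yields $x_{ac}=0$ for every $a,c$.

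The only real work is careful bookkeeping of the index shifts in $Uf_i=f_{i+1}U$ and $U^{-m}f_i=W_{i-m,i}$ to keep $\phi$ manifestly canonical; once the matrix units are in place the rest is routine. Notably the argument never uses irrationality of $\theta$, matching the statement of the lemma for arbitrary $\theta$.
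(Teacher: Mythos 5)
Your proof is correct, and it reaches the same destination as the paper by a recognizably different implementation of the same underlying idea (that the central unitary $U$, carrying the full character of the $\Z_N$-action, trivializes the crossed product). The paper instead builds a single order-$N$ unitary $\hat{p} = U + \bigl(\frac{1}{N}\sum_{k=1}^{N}p^k\bigr)(U^{1-N}-U)$ satisfying $p\hat{p}=\lambda\hat{p}p$, so that $p$ and $\hat{p}$ form a Weyl pair generating $M_N(\C)$ in the commutant of the fixed-point algebra, and then writes the isomorphism via the same isotypic decomposition $x=\sum_k x_k$ you use, leaving the verification that the map is a bijective morphism to the reader. Your matrix-unit route ($W_{ij}=U^{i-j}f_j$ with $f_j$ the spectral projections of $p$) buys a cleaner and more complete argument: the matrix-unit relations, the commutation with $A^{\Z_N}$, surjectivity, and injectivity are all checked explicitly, which is precisely the part the paper omits; the paper's $\hat p$ has the cosmetic advantage of being a single generator whose relation to $U$ can be inverted in closed form. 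One small point to tighten: in the surjectivity step the symbol $W_{i-m,\,i}$ has its first index possibly outside $\{0,\dots,N-1\}$; reducing it mod $N$ costs a factor of $U^{\pm N}$, which is harmless precisely because $U^N$ is invariant and hence absorbed into the $A^{\Z_N}$ tensor factor --- this is the same invariance of $U^N$ the paper invokes when inverting the formula for $\hat p$, and it is worth saying explicitly rather than leaving it under ``bookkeeping.''
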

    
\begin{proof}
Consider an element $\hat{p}$ in the crossed product algebra:

\be \hat{p} = U + \left( \frac{1}{N}  \sum_{k=1}^N p^k \right) \left( U^{1-N} - U \right). 
\label{up}
\ee
It is easy to verify that:
$$ \hat{p}^N =1, \qq\qq p \hat{p} = \lambda \hat{p} p, $$
so $p$ and $\hat{p}$ generate the matrix algebra $M_N(\C)$. Since $U$ is central
both $p$ and $\hat{p}$ commute with any element of the fixed point subalgebra 
$C(\T^3_\theta)^{\Z_n}$.

We shall demonstrate now the isomorphism from the lemma, which we shall denote
by $\Psi$. First,  the relation (\ref{up}) could be inverted, yielding:
$$ \Psi(U) = \hat{p} + \left( \frac{1}{N}  \sum_{k=1}^N p^k \right) \hat{p} 
\left(  U^N - 1 \right). $$
Since $U^N$ is an invariant element of the algebra then $\Psi(U)$ is clearly in 
$C(\T^3_\theta)^{\Z_N} \otimes M_N(\C)$. Take now arbitrary 
$x \in C(T^3_\theta)$. It is easy to see that $x$ could be uniquely decomposed
as a sum of elements homogeneous with respect to the action of $\Z_N$:
$$ x = \sum_{k=0}^{N-1} x_k, \qq\qq p \acts x_k = \lambda^k x_k. $$
Indeed, it is sufficient to take:
$$ x_k = \frac{1}{N} \sum_{j=0}^{N-1} \bar{\lambda}^{kj} (p^j \acts x), \;\; k=0,\ldots,N-1. $$
Then if we define:
$$ \Psi(x) = \sum_{k=0}^{N-1} (x_k U^{-k} ) \Psi(U^k), $$
then the range of $\Psi$ is clearly in $C(\T^3_\theta)^{\Z_N} \otimes M_N(\C)$, since
each of the elements $x_k U^{-k}$ is invariant and in the fixed point algebra. The
verification that $\Psi$ is an algebra morphism and is an isomorphism is left to 
the reader.
\end{proof}

Note that the isomorphism $\Psi$ which provides the Morita equivalence in our case
does not depend on the value of the parameter $\theta$, hence for the Bieberbach
manifolds (commutative and noncommutative) their $K$-Theory groups are the 
same as the $K$-theory groups of the crossed product algebras. 

A technical tool for the computations is the following lemma.
\begin{lemma}\label{across}
Let $\CA$ be a $C^*$-algebra, $\beta$ its automorphism and let $\alpha$
denote an action of $\Z_N$ on $\CA \rtimes_\beta \Z$, such that the 
restriction of the action on $\C\Z$ is by multiplication by a root of unity,
$\lambda^N = 1$ on its generator. 
Then the algebra $(\CA \rtimes_\beta \Z )\rtimes_\alpha\Z_N$ is 
isomorphic to $(\CA \rtimes_\alpha \Z_N) \rtimes_{\hat{\beta}} \Z$,
where $\hat{\beta}$ is the action of $\Z$, which is $\beta$ on $\CA$
and multiplication by a $\bar{\lambda}$ on $\Z_N$.
\end{lemma}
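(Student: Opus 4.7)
The plan is to identify both sides with the same universal $C^*$-algebra generated by $\CA$ together with two unitaries $T$ and $p$ subject to a common set of relations, and then read off the isomorphism by sending generators to generators.

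First I would unpack the left-hand side. By construction $\CA \rtimes_\beta \Z$ is generated by $\CA$ and a unitary $T$ with $TaT^{-1}=\beta(a)$, and the outer crossed product adjoins a unitary $p$ satisfying $p^N=1$ and $pxp^{-1}=\alpha(x)$ for all $x \in \CA \rtimes_\beta \Z$. The hypotheses say that $\alpha$ preserves $\CA$ (this is tacit in the statement since the right-hand side uses $\CA \rtimes_\alpha \Z_N$), with restriction $\alpha\vert_\CA$, and that $\alpha(T) = \lambda T$. Thus the left-hand algebra is the universal $C^*$-algebra for
\begin{equation*}
TaT^{-1} = \beta(a),\qquad pap^{-1} = \alpha\vert_\CA(a),\qquad pTp^{-1} = \lambda T,\qquad p^N = 1.
\end{equation*}

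Second I would unpack the right-hand side. Here $\CA \rtimes_\alpha \Z_N$ is generated by $\CA$ and $p$ with $p^N=1$ and $pap^{-1} = \alpha\vert_\CA(a)$, and the outer $\Z$-crossed product adjoins a unitary $\tilde T$ implementing $\hat\beta$, so that $\tilde T a\tilde T^{-1}=\beta(a)$ and $\tilde T p\tilde T^{-1}=\bar\lambda p$. But the last relation is equivalent to $p\tilde T=\lambda\tilde T p$, which matches the commutation relation between $T$ and $p$ on the left. Hence the right-hand side is universal for exactly the same list of relations, and the assignment $a\mapsto a$, $p\mapsto p$, $T\mapsto\tilde T$ extends to a $*$-isomorphism by the universal property.

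The only substantive thing to verify is that $\hat\beta$ is a well-defined automorphism of $\CA \rtimes_\alpha \Z_N$, i.e.\ that $\beta$ and $\alpha\vert_\CA$ commute. This is automatic from the hypotheses: applying $\alpha$ to $TaT^{-1}=\beta(a)$ and using $\alpha(T)=\lambda T$ gives $T\,\alpha\vert_\CA(a)\,T^{-1}=\alpha\vert_\CA(\beta(a))$, whence $\beta\circ\alpha\vert_\CA=\alpha\vert_\CA\circ\beta$. The main obstacle is therefore purely bookkeeping, namely matching the two sets of covariance relations and invoking the universal property rigorously; once the presentations are written out the isomorphism is essentially by inspection.
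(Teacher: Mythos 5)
Your proof is correct and takes essentially the same route as the paper's: both sides are identified as the universal $C^*$-algebra generated by $\CA$ and two unitaries subject to one and the same set of covariance relations, so the isomorphism is generator-to-generator. Your extra check that $\beta$ commutes with $\alpha\vert_{\CA}$ (so that $\hat\beta$ is a well-defined automorphism of $\CA \rtimes_\alpha \Z_N$) is a detail the paper leaves implicit, but it does not change the argument.
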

\begin{proof}
With the notation above, let us denote
 by $U$ the generator of $\Z$ and 
by $e$ the generator of $\Z_N$.  We have:
$$ U \acts a = \beta(a), \;\;\; e \acts a = \alpha(a), \;\;\; 
    e \acts U =  \lambda U. $$
The action $\hat{\beta}$ is defined as follows:
\begin{equation}
\hat{\beta}(a) = \beta(a), \;\;\; \forall a \in \CA, 
\qq\qq \hat{\beta}(e) = \overline{\lambda} e.
\label{bhat}
\end{equation}
It is easy to see that both crossed product algebras are isomorphic
to each other as the defining relations are identical. 
\end{proof}

Applying this to the case of the $T^3_\theta$ and cross product by the action
of $\Z_N$ (N=2,3,4,6) we have:

\begin{corollary}
The algebra of the noncommutative three-torus $C(T^3_\theta)$ is a crossed product 
$C(T^2_\theta) \lcross \Z$. For $N=2,3,4,6$ the action $\alpha$ of $\Z_N$ on it is 
by multiplication on the generator of $\Z$ and leaves the algebra $C(T^2_\theta)$ 
invariant. This action comes, in fact, from the $SL(2,\Z)$ group of automorphisms of 
$T^2_\theta$. Therefore by the lemma \ref{across} we have for $N=2,3,4,6$ the
following isomorphism:
\begin{equation}
C(T^3_\theta) \rtimes_\alpha \Z_N \sim (C(T^2_\theta) \rtimes_\alpha \Z_N) 
\rtimes_{\hat{\beta}} \Z, 
\label{actionZ}
\end{equation}
where $\hat{\beta}(a) = a, \forall a \in C(\T^2_\theta)$ and $\hat{\beta}(e) = \bar{\lambda} e$,
for $e \in \C\Z_n$ (generator of $\Z_N$).
\end{corollary}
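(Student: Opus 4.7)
The plan is to identify the hypotheses of Lemma \ref{across} in the present setting and then invoke it directly. Since we have fixed $\theta_{12}=\theta_{13}=0$ and $\theta_{23}=-\theta$, the generator $U$ is central in $C(\T^3_\theta)$ and commutes with the rotation subalgebra generated by $V,W$, which is exactly $C(\T^2_\theta)$. Consequently
$$
C(\T^3_\theta) \;\cong\; C(\T^2_\theta) \otimes C(S^1) \;\cong\; C(\T^2_\theta) \rtimes_{\beta} \Z,
$$
where $\beta$ is the trivial automorphism of $C(\T^2_\theta)$ and the generator of $\Z$ is identified with $U$. This is the presentation of $C(\T^3_\theta)$ as an iterated crossed product required by Lemma \ref{across}.

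Next I would inspect Table \ref{table1} row by row to verify the two hypotheses of Lemma \ref{across}: that the $\Z_N$-action on the generator of $\Z$ is by multiplication by an $N$-th root of unity, and that it leaves the inner $C(\T^2_\theta)$ invariant. In every case $\mathrm{B2}_\theta,\mathrm{B3}_\theta,\mathrm{B4}_\theta,\mathrm{B6}_\theta$ the generator $e$ acts on $U$ as $e \acts U = \lambda U$ with $\lambda = e^{2\pi i/N}$ (up to a choice of primitive root), and it sends each of $V,W$ to a monomial in $V,W$ (possibly up to the phase $e^{-\pi i\theta}$ needed to make the $\theta$-twisted relation consistent). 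Thus the $\Z_N$-action restricts to an automorphism $\alpha$ of $C(\T^2_\theta)$; inspection of the exponents shows that $\alpha$ on $V,W$ is implemented by the matrices
$$
\begin{pmatrix}-1 & 0 \\ 0 & -1\end{pmatrix},\;
\begin{pmatrix}-1 & 1 \\ -1 & 0\end{pmatrix},\;
\begin{pmatrix}0 & 1 \\ -1 & 0\end{pmatrix},\;
\begin{pmatrix}0 & 1 \\ -1 & 1\end{pmatrix}
$$
in $SL(2,\Z)$ for $N=2,3,4,6$ respectively, which are the standard finite-order elements of $SL(2,\Z)$. The (only slightly technical) point to check here is that the phase $e^{-\pi i\theta}$ in the $\mathrm{B3}_\theta$ and $\mathrm{B6}_\theta$ cases is precisely what is needed to make the map a well-defined automorphism of the twisted algebra, but this is a direct computation using the relation $WV = e^{2\pi i\theta}VW$.

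With these two hypotheses verified, Lemma \ref{across} applies with $\CA = C(\T^2_\theta)$ and $\beta = \id_{C(\T^2_\theta)}$, yielding the desired isomorphism
$$
C(\T^3_\theta) \rtimes_\alpha \Z_N \;\cong\; (C(\T^2_\theta) \rtimes_\alpha \Z_N) \rtimes_{\hat{\beta}} \Z,
$$
where $\hat{\beta}$ is the trivial action on $C(\T^2_\theta)$ (inherited from $\beta$) and sends the generator $e$ of $\Z_N$ to $\bar{\lambda} e$, as prescribed by (\ref{bhat}). The main obstacle is nothing conceptual but the bookkeeping verification that each row of Table \ref{table1} really fits the template of Lemma \ref{across}; once that is done the corollary is immediate.
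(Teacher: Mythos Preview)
Your proposal is correct and follows exactly the approach the paper intends: the corollary is stated without an explicit proof because it is meant to be an immediate application of Lemma \ref{across} once one observes that $U$ is central (so $C(\T^3_\theta)\cong C(\T^2_\theta)\rtimes_{\id}\Z$) and that the $\Z_N$-actions from Table \ref{table1} act on $U$ by a root of unity and restrict to $SL(2,\Z)$-automorphisms of the $V,W$ subalgebra. Your write-up simply makes these verifications explicit; the only minor quibble is that your displayed $SL(2,\Z)$ matrices use a transposed/row convention relative to the column-vector reading of Table \ref{table1}, but this is a harmless bookkeeping choice and does not affect the argument.
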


\begin{remark}
The crossed product algebras of the noncommutative torus by the cyclic subgroups 
of $SL(2,\Z)$ have been studied intensely as symmetric noncommutative tori and 
noncommutative spheres \cite{ELPW}. Although classically they correspond to 
orbifolds rather than to manifolds, we can nevertheless view the noncommutative 
Bieberbach algebras as circle bundles over some noncommutative spheres.
\end{remark}

\subsection{$K$-theory groups from Pimsner-Voiculescu}

Using (\ref{actionZ}) we can use the Pimsner-Voiculescu exact sequence provided that we 
know the $K$-theory groups of the corresponding crossed product of noncommutative torus 
by the actions of the respective cyclic group $\Z_N$ and the exact form of the action 
of $\Z$ on the generators of these $K$-theory groups. Although we shall proceed case 
by case the methods are basically identical: we first determine the generators 
of $K$-theory groups for the algebras $C(T^2_\theta) \lcross \Z_N$, $N=2,3,4,6$and 
the explicit action of the $\hat{\beta}$ automorphisms on them. 

The basic tool is the existence of traces on the dense subalgebra of the crossed product
algebra of noncommutative torus by a finite cyclic group and their behavior under the 
action of $\hat{\beta}$. The origin of such traces is easy to understand, let us recall
that in fact they come from twisted traces on the algebra of the noncommutative torus 
itself. 
\begin{remark}
Let $\CA$ be an algebra and let $\sigma$ denote an action of a finite cyclic group $\Z_N$. 
If $\Phi_s$ is an $\sigma$-invariant and $\sigma^s$-twisted trace on $\CA$, 
$0 < s \leq N$: 
$$ \Phi_s(\sigma(a))= \Phi_s(a), \qq\qq\qq \Phi_s(ab) = \Phi_s(\sigma^s(b)a), \qq 
\forall a,b \in \CA,$$
then $\Phi$ extends to a trace on the crossed product algebra $\CA \lcross \Z_N$:
$$ \tilde{\Phi}_s \left( \sum_{k=0}^{N-1} a_k e^k \right) = \Phi_{s} (a_{N-s}), 
     \qq 0 < s\leq N.$$
\end{remark}
where $e$ is the generator of $\Z_N$. The proof of the fact is a simple computation:
$$
\begin{aligned}
\tilde{\Phi}_s & \left( \left( \sum_{k=0}^N a_k e^k \right) \left( \sum_{j=0}^N b_j e^j \right) \right)
  = \tilde{\Phi}_s \left( \sum_{k,j=0}^N a_k \sigma^{k} (b_j) e^{k+j} \right) \\
& = \sum_{k+j=N-s} \Phi_s \left( a_k \sigma^{k}(b_j) \right)  
  = \sum_{k+j=N-s} \Phi_s \left( \sigma^{k+s}(b_j) a_k \right) 
  = \sum_{k+j=N-s} \Phi_s \left( b_j \sigma^{j}(a_k) \right) \\
& = \tilde{\Phi}_s  \left( \left( \sum_{k=0}^N b_j e^j \right) \left( \sum_{j=0}^N a_k e^k \right) \right).
\end{aligned}
$$

In a series of papers Walters and Buck and Walters demonstrated the following crucial theorem:
\begin{theorem}
\label{injectK0}
Let $\T^2_\theta$ be the irrational rotation algebra. Then for $G=\Z_N$,$N=2,3,4,6$ with the 
actions (on the generators $V,W$) given in table (\ref{table1}) there exists a family of unbounded 
traces on the algebra $\T^2_\theta \lcross G$, which together with the canonical trace 
$\tau$ on $\T^2_\theta$ provide an injective morphism from the $K_0$-group 
into $\C^{r(N)}$, for some $r(N)$.
\end{theorem}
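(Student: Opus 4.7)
The plan is to follow a direct three-step strategy: construct enough twisted traces on $\T^2_\theta$, extend them to the crossed product via the remark preceding the statement, and verify that the joint evaluation map on $K_0$ is injective by a rank computation on explicit projection generators.

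For the first step, the generator of $\Z_N$ acts on $\T^2_\theta$ through an element $A \in SL(2,\Z)$ of order $N$, which in turn permutes the Fourier monomials $V^m W^n$. For each $s = 1,\ldots,N-1$ I would look for linear functionals
$$ \Phi_s\Big(\sum_{(m,n)} a_{mn} V^m W^n\Big) = \sum_{(m,n)} c_s(m,n)\, a_{mn} $$
on the smooth subalgebra, subject to $\sigma$-invariance and the $\sigma^s$-twisted trace identity $\Phi_s(ab) = \Phi_s(\sigma^s(b)a)$. Expanding on monomials and collecting the phases from $WV = e^{2\pi i\theta} VW$, both conditions reduce to linear constraints on $c_s$: its support must lie on $A^s$-orbits in $\Z^2$, and on each orbit its values are fixed up to a finite-dimensional choice. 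Counting the admissible orbits for each $(N,s)$ produces, together with the canonical trace $\tau$, exactly $r(N)$ linearly independent candidates.

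Second, by the remark immediately preceding the theorem each $\Phi_s$ extends canonically to a genuine trace $\tilde\Phi_s$ on $\T^2_\theta \lcross \Z_N$, and jointly these traces give a group homomorphism
$$ \Theta\colon K_0(\T^2_\theta \lcross \Z_N) \longrightarrow \C^{r(N)}, \qquad [p]\mapsto \bigl(\tilde\Phi_s(p)\bigr)_s. $$
For injectivity I would use that $K_0(\T^2_\theta \lcross \Z_N)$ is free abelian of rank exactly $r(N)$, a fact that follows from the symmetric-noncommutative-torus computations cited in the preceding remark and that is in any case recovered in the next subsection via Pimsner--Voiculescu. It therefore suffices to exhibit a $\Z$-basis of projections and show that the resulting $r(N)\times r(N)$ matrix of trace values has nonzero determinant.

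Natural candidates for generators are the unit, images of Rieffel projections in the two irrational subalgebras, and spectral projections of the averaging element $\frac{1}{N}\sum_{k=0}^{N-1} e^k$ localized at the orbifold fixed points of the $\Z_N$-action on $\T^2$. Evaluating the twisted traces on the averaging projections is straightforward since they are polynomials in $e$. The main obstacle is evaluation on the Rieffel-type projections: they are not given in closed form for generic $\theta$, and one must exploit continuity of $\Phi_s$ in the parameter $\theta$ together with the known $\tau$-value to pin down the other $\Phi_s$-values, being careful that specialization to $\theta = 0$ does not collapse the twist. Once the trace matrix is written down, checking that its determinant does not vanish identically is a finite linear-algebra computation, after which the injectivity of $\Theta$ is immediate.
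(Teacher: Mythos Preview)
The paper does not actually prove this theorem: it is quoted from the work of Walters and Buck--Walters, and the authors explicitly write that ``the proofs and the exact form of these traces and their value on the generators of $K_0$-group are to be found in \cite{Walt95,Walt00,BuWa1,BuWa2}''. So there is no paper-proof to compare against beyond the illustrative $N=2$ table of trace values that follows the statement.

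Your outline is in the right spirit and the first two steps are fine, but the third step contains a real gap. You assume that the number $r(N)$ of traces equals the rank of $K_0(\T^2_\theta \lcross \Z_N)$ and then propose to check that an $r(N)\times r(N)$ matrix of trace values has nonzero determinant. In the cases at hand this is simply false: for $N=2$ there are five traces ($\tau$ and the four $\tau_{jk}$) but $K_0\cong\Z^6$, so the evaluation matrix is $5\times 6$ and there is no determinant to compute. An $\R$-linear-independence argument cannot possibly give injectivity of a map $\Z^6\to\C^5$.

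What actually makes the joint trace map injective is the irrationality of $\theta$. Looking at the $N=2$ table in the paper: the canonical trace of the exotic generator $[\mathcal{M}_2]$ is $\tfrac{1}{2}\theta$, while all other generators have rational $\tau$-value. A putative $\Z$-linear relation among the six trace vectors forces, via the four unbounded-trace coordinates, all coefficients to be determined by a single integer $m$; plugging back into the $\tau$-coordinate yields an equation of the form $n_1 + \theta m = 0$ with $n_1,m\in\Z$, which for irrational $\theta$ has only the trivial solution. This is the mechanism in Walters' papers for all four $N$, and it is precisely why the theorem is stated only for the irrational rotation algebra. Your ``continuity in $\theta$ and specialization to $\theta=0$'' idea for evaluating traces on Rieffel-type projections would in fact destroy this argument, since at $\theta=0$ the map ceases to be injective. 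The correct route is to compute the trace values directly (as in the cited references) and then run the $\Z$-linear-independence argument that genuinely uses $\theta\notin\Q$.
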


The proofs and the exact form of these traces and their value on the generators of $K_0$-group
are to be found in \cite{Walt95,Walt00, BuWa1, BuWa2}. We skip the presentation of 
details, showing as an illustration an example of the easiest $N=2$ case. Following 
\cite[page 592]{Walt95} we see that there are four unbounded traces on 
$T^2_\theta \rtimes \Z_2$: $\tau_{jk}$, $j,k= 0,1$, which are defined as follows 
on the basis of $\T^2_\theta \lcross \Z_2$:
\begin{equation}
\tau_{jk} ( V^\iota W^\kappa p^\rho) = 4 e^{-\pi i \theta \iota \kappa} \delta^\rho_1 
\delta^{\bar{\iota}}_j \delta^{\bar{\kappa}}_k, \qq\qq \iota,\kappa \in \Z, \rho=0,1,
\end{equation}
where $\bar{x} = x \mod 2$. The other cases ($N=3,4,6$) can be treated similarly. 
Note that the collection of traces provides no longer an injective map from $K_0$
into $\C^{r(N)}$. However, if one adds the nontrivial cyclic two-cocycle then it is
again in injective morphism. 

To have all necessary tools we only need to study the behavior of the 
traces under the action of $\hat{\beta}$.
\begin{lemma}
\label{betatrace}
If $\tilde{\Phi}_s$ is a trace on $C(\T^2_\theta) \lcross \Z_N$, which comes from 
a $\sigma^s$-twisted trace, then under the action of $\Z$ by $\hat{\beta}$ we have: 
\begin{equation}
\tilde{\Phi}_s(\hat{\beta}(a)) = e^{\frac{2\pi i s}{N}} \tilde{\Phi}_s(a),
\qq 
\forall a \in \T^2_\theta \lcross \Z_N 
\end{equation}
\end{lemma}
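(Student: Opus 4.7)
The plan is to verify the identity by direct computation, using the concrete form of $\tilde\Phi_s$ given just before the lemma together with the defining formulas for $\hat\beta$ from the Corollary.

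First I would take an arbitrary element $a \in C(\T^2_\theta) \lcross \Z_N$ and expand it in the canonical basis
$$a = \sum_{k=0}^{N-1} a_k\, e^k, \qquad a_k \in C(\T^2_\theta),$$
where $e$ is the generator of $\Z_N$. Since $\hat\beta$ is an algebra automorphism which, by the Corollary, fixes $C(\T^2_\theta)$ pointwise and multiplies $e$ by $\bar\lambda = e^{-2\pi i/N}$, we get
$$\hat\beta(a) = \sum_{k=0}^{N-1} \bar\lambda^{k}\, a_k\, e^k.$$

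Next I would plug this into the definition
$\tilde\Phi_s\bigl(\sum_k a_k e^k\bigr) = \Phi_s(a_{N-s})$ given in the Remark. Only the term $k = N-s$ survives, and since $\Phi_s$ is $\C$-linear the scalar $\bar\lambda^{N-s}$ factors out, yielding
$$\tilde\Phi_s(\hat\beta(a)) = \bar\lambda^{N-s}\,\Phi_s(a_{N-s}) = \bar\lambda^{N-s}\,\tilde\Phi_s(a).$$
The identity $\bar\lambda^{N-s} = \bar\lambda^{N}\lambda^{s} = \lambda^{s} = e^{2\pi i s/N}$ then gives the claim.

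There is no real obstacle here — the argument is essentially a bookkeeping check that uses only the definition of $\hat\beta$, the definition of $\tilde\Phi_s$, and the relation $\lambda^N=1$. The only mild subtlety is to make sure the formulas for $\hat\beta$ are applied in the correct crossed-product description: the element $e$ on which $\hat\beta$ acts nontrivially is the generator of $\Z_N$ in $C(\T^2_\theta) \lcross \Z_N$ (the ``outer'' factor of the rewritten algebra from the Corollary), not the $\Z$-generator $U$ of the original three-torus. Once this is kept straight, the whole computation is one line and extends to general $a$ by linearity and continuity.
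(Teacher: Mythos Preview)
Your proof is correct and follows exactly the same route as the paper's own argument: expand $a=\sum_k a_k e^k$, apply $\hat\beta(e)=\bar\lambda e$ with $\hat\beta$ trivial on $C(\T^2_\theta)$, pick off the $k=N-s$ term via the definition of $\tilde\Phi_s$, and simplify $\bar\lambda^{N-s}=e^{2\pi i s/N}$. Your remark about keeping track of which generator $e$ is acted on is a helpful clarification, but otherwise there is nothing to add.
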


\begin{proof}
The above property follows directly from the form of the action $\hat{\beta}$ (\ref{bhat}).
$$ 
\begin{aligned}
\tilde{\Phi}_s & \left( \hat{\beta} \left( \sum_{k=0}^N a_k e^k \right) \right) 
 =  \tilde{\Phi}_s \left( \sum_{k=0}^N a_k \bar{\lambda}^k e^k \right) \\
 & = \bar{\lambda}^{N-s} \Phi_s (a_{N-s}) =  e^{\frac{2 \pi i s}{N}} \tilde{\Phi}_s  
\left( \sum_{k=0}^N a_k e^k \right). 
\end{aligned}
$$
Observe, that, in particular, taking $s=0$ we see that the usual trace 
$\tau$ is $\hat{\beta}$ invariant.
\end{proof}

\subsubsection{$\mathrm{B2}_\theta$}

In the case of the $\Z_2$ group action, the algebra $(\T_\theta^2 \lcross_\alpha \Z_2)$ is one of 
the most studied and we have at our disposal all the necessary results.

\begin{theorem}[\cite{Kumj90}, \cite{Walt95}, \cite{ELPW}] 
$$  K_0(\T^2_\theta/\Z_2) =\Z^6, \;\;\; K_1(\T^2_\theta/\Z_2) = 0 $$
and the generators of $K_0$ group are:
$$ [1], \qq [e_{00}],\qq [e_{01}], \qq [e_{10}], \qq [e_{11}], \qq [{\cal M}_2], $$
where:
$$ 
e_{00} = \oh (1+p), \qq 
e_{01} = \oh(1+V p), \qq 
e_{10} = \oh(1+W p), \qq 
e_{11} = \oh(1+e^{i\pi\theta} VW p), 
$$
and ${\cal M}_2$ is a module, which (in the irrational case only) comes from the projection
$p_{{\cal M}_2}$ of the form $\oh e_\theta (1 + p)$ for a $\Z_2$-invariant Powers-Rieffel 
projection of trace $\theta$.
\end{theorem}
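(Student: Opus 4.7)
The plan is to combine the trace-detection result of Theorem \ref{injectK0} with the rank computations of \cite{Kumj90, Walt95, ELPW} to show that the six listed classes form a $\Z$-basis of $K_0$. First I would check that each $e_{jk}$ is a genuine projection in $C(\T^2_\theta) \rtimes \Z_2$. This reduces to verifying $(ap)^2 = a\,\alpha(a)\,p^2 = 1$ for $a \in \{1, V, W, e^{i\pi\theta}VW\}$; the phase $e^{i\pi\theta}$ in $e_{11}$ is chosen exactly so that $e^{i\pi\theta}VW \cdot \alpha(e^{i\pi\theta}VW) = e^{2\pi i\theta}\, V W V^* W^* = 1$. The projection $p_{{\cal M}_2}$ is well defined because, for irrational $\theta$, one can select a $\Z_2$-invariant Powers--Rieffel projection $e_\theta$ of trace $\theta$, following the construction of \cite{Walt95}.

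Next I would evaluate the five traces $\tau, \tau_{00}, \tau_{01}, \tau_{10}, \tau_{11}$ together with a cyclic $2$-cocycle $\varphi$ of Chern--Connes type on the six classes, and record the values in a $6\times 6$ matrix. A direct application of the explicit formula for $\tau_{jk}$ recalled in the excerpt gives
$$
\tau_{jk}(e_{mn}) = 2\,\delta_{jm}\delta_{kn}, \qquad \tau_{jk}(1) = 0,
$$
so the $4\times 4$ subblock indexed by the twisted traces and by the $e_{mn}$ is $2 I_4$. The canonical trace $\tau$ gives $\tau(1)=1$, $\tau(e_{jk}) = \frac{1}{2}$, and $\tau([{\cal M}_2]) = \frac{\theta}{2}$; the cocycle $\varphi$ is arranged to vanish on $[1]$ and on the $[e_{jk}]$ but to register the noncommutative Chern number of the Powers--Rieffel factor on $[{\cal M}_2]$. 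Row reduction then makes the $6\times 6$ matrix block-triangular with nonzero determinant (the bottom-right block involves $\theta$, which is irrational), proving that the six classes are $\Z$-linearly independent in $K_0$.

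To finish I would invoke \cite{Kumj90, Walt95, ELPW} for the matching upper bound $\mathrm{rk}\,K_0 \le 6$ and for $K_1 = 0$. These follow from an equivariant K-theory / six-term sequence analysis of the flip on $\T^2_\theta$: since $\alpha_*$ is the identity on $K_0(\T^2_\theta)=\Z^2$ and acts as $-\id$ on $K_1(\T^2_\theta)=\Z^2$, the Green--Julg/PV-type sequence forces the correct ranks. Combined with the independence above, the six classes constitute a free $\Z$-basis. I expect the main obstacle to be producing the cocycle $\varphi$ so that it simultaneously vanishes on every $[e_{jk}]$ yet detects $[{\cal M}_2]$ --- this requires a careful extension of the Connes fundamental cyclic $2$-cocycle of $\T^2_\theta$ to the crossed product, carried out in \cite{Walt95, BuWa1}; every other step in the plan is a direct calculation.
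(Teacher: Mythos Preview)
The paper does not supply its own proof of this theorem: it is quoted verbatim from the cited sources \cite{Kumj90,Walt95,ELPW}, and the surrounding text only records the trace values (Table~\ref{z2traces}) needed later for the Pimsner--Voiculescu computation. So there is nothing in the paper to compare your argument against; I can only assess your sketch on its own terms.

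Your sketch contains a genuine gap. Knowing that $K_0\cong\Z^6$ and exhibiting six $\Z$-linearly independent classes does \emph{not} show those classes form a $\Z$-basis: they could span a proper sublattice of finite index. In fact your own computation makes this concern explicit --- the $4\times4$ block of twisted-trace values is $2I_4$, so the determinant of your $6\times6$ character matrix carries a factor of $16$, and nothing you have written rules out the possibility that $[1],[e_{00}],\dots,[{\cal M}_2]$ generate only an index-$16$ sublattice. What Walters actually does in \cite{Walt95} is harder: he identifies the \emph{image} of $K_0$ under the combined trace/character map as a specific lattice in $\R\oplus\C^4$ and checks that the six listed classes hit a set of lattice generators. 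Your plan would need an analogous surjectivity step.

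A second, smaller issue: there is no Pimsner--Voiculescu sequence for a $\Z_2$-crossed product, so the phrase ``Green--Julg/PV-type sequence forces the correct ranks'' is not a proof. The rank computation in the literature goes through quite different machinery --- Kumjian's argument \cite{Kumj90} is specific to the flip, and \cite{ELPW} uses the Baum--Connes assembly map for the relevant crystallographic groups. If you intend to cite those papers for the rank anyway, you may as well cite them for the full statement, which is exactly what the present paper does.
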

%%%%%%%%%%%%%%%%%%%%%%%%%%%%%%%%%%%%%%%%%%%%%%%%%%%%%%%%%%%%%%%%%%%%%%%%%%%%%%
To compute the explicit form of the action of the automorphism $\beta$ on the 
above generators we use the the Chern-Connes map from $K_0$. Using the usual trace
and the unbounded traces (which we wrote explicitly) one has \cite{Walt95}:
\begin{table}[here]
\centering
\begin{tabular}{|c|c|c|c|c|c||c|}
\hline
generator    & $\tau$        & $\tau_{00}$ & $\tau_{01}$ & $\tau_{10}$ & $\tau_{11}$ & C
                                                                               \\ \hline \hline
$[1]$        & $1$           & $0$         & $0$         & $0$       & $0$     &  $0$  \\ \hline
${\cal M}_2$ & $\soh \theta$ & $1$         & $-\epsilon$ & $\epsilon$&  $-1$& $1$      \\ \hline
$[e_{01}]$   & $\soh$        & $2$         & $0$         & $0$       & $0$&  $0$       \\ \hline
$[e_{10}]$   & $\soh$        & $0$         & $2$         & $0$       & $0$&  $0$      \\ \hline
$[e_{01}]$   & $\soh$        & $0$         & $0$         & $2$       & $0$&  $0$       \\ \hline
$[e_{10}]$   & $\soh$        & $0$         & $0$         & $0$       & $2$ &  $0$      \\ \hline
\end{tabular}
\caption{Value of traces on the generators of $K_0(C(\T^2_\theta) \lcross \Z_2)$}
\label{z2traces}
\end{table}
where $\epsilon$ is $+1$ for $0 < \theta < \oh$ and $-1$ for $\oh < \theta <1$. Here, $C$
denotes the canonical nontrivial cyclic two-cocycle over smooth sub algebra of 
$C(\T^2_\theta)$ (which naturally extends to its crossed product with $\Z_2$). The actual
form of the cocycle is not relevant, what matters is that its pairing with generators
of $K_0$ group is nonzero only for ${\cal M}$ (and it has been chosen to be $1$).
%%%%%%%%%%%%%%%%%%%%%%%%%%%%%%%%%%%%%%%%%%%%%%%%%%%%%%%%%%%%%%%%%%%%%%%%%%%%%%
\begin{proposition}
$$  K_0(\mathrm{B2}_\theta) = \Z^2 \oplus (\Z_2)^2, \qq \qq 
\qq K_1(\mathrm{B2}_\theta) = \Z^2.
$$
\label{lemz2}
\end{proposition}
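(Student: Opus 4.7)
The plan is to apply the Pimsner--Voiculescu six-term exact sequence to the presentation~(\ref{actionZ}), using the Morita equivalence to identify $K_*(\mathrm{B2}_\theta)$ with $K_*\bigl((C(\T^2_\theta)\rtimes_\alpha \Z_2) \rtimes_{\hat{\beta}}\Z\bigr)$. Writing $\CB := C(\T^2_\theta)\rtimes_\alpha \Z_2$, the Walters/Kumjian theorem recalled above gives $K_0(\CB)=\Z^6$ with explicit generators $[1]$, $[e_{00}]$, $[e_{01}]$, $[e_{10}]$, $[e_{11}]$, $[{\cal M}_2]$, while $K_1(\CB)=0$. The sequence therefore collapses to
$$
0 \,\longrightarrow\, K_1(\mathrm{B2}_\theta) \,\longrightarrow\, \Z^6 \xrightarrow{\,1-\hat{\beta}_*\,} \Z^6 \,\longrightarrow\, K_0(\mathrm{B2}_\theta) \,\longrightarrow\, 0 ,
$$
so the whole computation reduces to writing the $6\times 6$ integer matrix of $\hat{\beta}_*$ in this basis and reading off its kernel and cokernel.

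The next step is to compute $\hat{\beta}_*$ on each generator. Since $\hat{\beta}$ is the identity on $C(\T^2_\theta)$ and multiplies the generator $p$ of $\Z_2$ by $-1$, each Bott projection $e_{jk} = \frac{1}{2}(1+x_{jk}p)$ satisfies $\hat{\beta}(e_{jk}) = 1-e_{jk}$, whence $\hat{\beta}_*[e_{jk}] = [1]-[e_{jk}]$. Likewise $\hat{\beta}(p_{{\cal M}_2}) = \frac{1}{2}e_\theta(1-p) = e_\theta - p_{{\cal M}_2}$, so $\hat{\beta}_*[{\cal M}_2] = [e_\theta]-[{\cal M}_2]$, and it remains to express the fixed class $[e_\theta]$ as an integer combination of the six generators. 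I would invoke Lemma~\ref{betatrace}: $\tau$ is $\hat{\beta}$-invariant while each twisted trace $\tau_{jk}$ is negated, and the cyclic cocycle $C$ is preserved because it is built from $\tau$ on the subalgebra $C(\T^2_\theta)$ that $\hat{\beta}$ fixes pointwise. Comparing the values $\tau[e_\theta]=\theta$, $\tau_{jk}[e_\theta]=0$, $C[e_\theta]=1$ against the rows of the trace table pins down the unique integer expression, giving the sanity check
$$
[e_\theta] \;=\; [{\cal M}_2] + \hat{\beta}_*[{\cal M}_2] \;=\; 2[{\cal M}_2] - [e_{00}] + \epsilon[e_{01}] - \epsilon[e_{10}] + [e_{11}] .
$$

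With the matrix of $1-\hat{\beta}_*$ in hand, kernel and cokernel are mechanical. The classes $[1]$ and $[e_\theta]$ are $\hat{\beta}$-invariant (they are the image of $K_0(C(\T^2_\theta))$, on which $\hat{\beta}$ acts trivially), and a direct row reduction of the $6\times 6$ matrix confirms these are the only integer invariants, so $K_1(\mathrm{B2}_\theta)=\Z^2$. For the cokernel, the $[{\cal M}_2]$-coordinate of every column of $1-\hat{\beta}_*$ vanishes, contributing a free $\Z$ summand; on the complementary $\Z^5$ the five relations $-[1]+2[e_{jk}]\equiv 0$ together with $[e_{00}]-\epsilon[e_{01}]+\epsilon[e_{10}]-[e_{11}]\equiv 0$ admit a Smith normal form with elementary divisors $(1,1,2,2,0)$, yielding $\Z\oplus(\Z_2)^2$ and hence $K_0(\mathrm{B2}_\theta) = \Z^2\oplus(\Z_2)^2$.

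The main obstacle is the middle step: pinning down $[e_\theta]$, equivalently $\hat{\beta}_*[{\cal M}_2]$, on the nose as an integer vector. Because $\tau([{\cal M}_2])=\frac{\theta}{2}$ is irrational while the trace values of the other five generators are rational, it is precisely the irrationality of $\theta$ that forces the coefficient of $[1]$ in $[e_\theta]$ to vanish and determines the remaining coefficients uniquely; the combined trace-plus-cocycle pairing is needed to ensure injectivity of the resulting map from $K_0(\CB)$ into $\C^5\oplus\Z$. Once that single vector is correctly identified, the remaining finite-rank integer linear algebra is routine.
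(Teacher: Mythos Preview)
Your proof is correct and follows essentially the same route as the paper: both use the Pimsner--Voiculescu sequence for $(C(\T^2_\theta)\rtimes\Z_2)\rtimes_{\hat\beta}\Z$, read off $\hat\beta_*$ on the six $K_0$-generators from the trace-plus-cocycle table together with its behaviour under $\hat\beta$ (Lemma~\ref{betatrace} and Theorem~\ref{injectK0}), and finish with the integer linear algebra of $\id-\hat\beta_*$. One harmless slip: since $[e_\theta]=[p_{{\cal M}_2}]+\hat\beta_*[p_{{\cal M}_2}]$ and $C$ is $\hat\beta$-invariant with $C([{\cal M}_2])=1$, you should have $C[e_\theta]=2$ rather than $1$ --- your displayed expression for $[e_\theta]$ already has $C$-value $2$, so the slip is purely textual and the remaining Smith-normal-form computation is unaffected.
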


\begin{proof}
First, combining (\ref{betatrace}) and (\ref{z2traces}) with the theorem \ref{injectK0} we 
obtain that the induced action $\hat{\beta}_*$ on the generators of $K_0$ is as follows:
$$
\begin{aligned}
&\hat{\beta}_*([e_{00}]) =  [1] - [e_{00}],&  \;\;\;
&\hat{\beta}_*([e_{10}]) =  [1] - [e_{10}],&  \\ 
&\hat{\beta}_*([e_{01}]) =  [1] - [e_{01}],&  \;\;\; 
&\hat{\beta}_*([e_{11}]) =  [1] - [e_{11}],&
\end{aligned}
$$
and
$$ 
\hat{\beta}_*([{\cal M}_2]) = [{\cal M}_2] 
           - \left( [e_{00}] - [e_{11}] \right)
           - \epsilon \left( [e_{10}] - [e_{01}] \right), \qq
\hat{\beta}_*[1] = [1]. 
$$
Using the above results we obtain the Pimsner-Voiculescu exact sequence:
$$
\xymatrix{
  0 \ar[r] 
& 0 \ar[r]
&
 K_1(\mathrm{B2}_\theta)  \ar[d] \\
  K_0(\mathrm{B2}_\theta) \ar[u]  
& \ar[l] \Z^6
& \ar[l]^{\hbox{id}-\hat{\beta}_*} \Z^6
}
$$
where $\id - \hat{\beta}_*$ has the form:
$$ \id - \hat{\beta}_* = \left( 
\begin{array}{cccccc}
0 &-1 &-1 &-1 &-1 & 0 \\
0 & 2 & 0 & 0 & 0 & 1\\
0 & 0 & 2 & 0 & 0 & \epsilon\\
0 & 0 & 0 & 2 & 0 &-\epsilon \\ 
0 & 0 & 0 & 0 & 2 &-1 \\
0 & 0 & 0 & 0 & 0 & 0 \\
\end{array} 
\right)
$$
Immediately we have:
$$ \ker (\id -\hat{\beta}_*) = \Z^2, \qq \qq \im (\id - \hat{\beta}_*) = \Z^4,$$ 
and basic algebra computations give us the result that the kernel of the
map and the quotient by its image, which are independent of the value of $\epsilon$.
\end{proof}

\subsubsection{$\mathrm{B3}_\theta$}

Here we need to use a similar type of result as the one obtained for 
the $\Z_2$ action. 

\begin{theorem}[\cite{BuWa1}] 
The $K$-theory groups and generators of $\T^2_\theta \lcross \Z_3$ are:
$$  K_0(\T^2_\theta \lcross \Z_3) =\Z^8, \;\;\; K_1(\T^2_\theta \lcross \Z_3) = 0, $$
with the generators of $K_0$ group:
$$ [1], \qq [Q_0(X)], \qq [Q_0(Y)], \qq [Q_0(p)], 
        \qq [Q_1(X)], \qq [Q_1(Y)], \qq [Q_1(p)], 
        \qq [{\cal M}_3], $$
where:
$$ Q_j(x) = \frac{1}{3}\left( 1 + e^{\frac{2\pi j i}{3}}x + e^{\frac{4\pi j i}{3}} x^2 \right),$$
and 
$$ X = e^{\frac{1}{3} \pi i \theta} Vp, \qq Y = e^{\frac{2}{3} \pi i \theta} V^2 p, $$
with $p$, being the generator of $\Z_3$ group, $p^3= \id$. The generator ${\cal M}_3$ 
corresponds to an exotic module related to the nontrivial projective module over irrational
rotation algebra.
\end{theorem}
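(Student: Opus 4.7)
The plan is to reduce the $K$-theory computation to a tractable topological one and then identify each listed class in $K_0$ explicitly. First, I would invoke the $KK$-equivalence between $\T^2_\theta \lcross \Z_3$ and the commutative crossed product $C(\T^2) \lcross \Z_3$, which can be established either via a continuous field of $C^*$-algebras in $\theta$ together with a Kasparov--Rieffel rigidity argument, or through the Baum--Connes/Green--Julg isomorphism $K_\ast^{\Z_3}(\T^2) \cong K_\ast(C(\T^2) \lcross \Z_3)$. The $\Z_3$-action on $\T^2$ has three isolated fixed points, each contributing a copy of $R(\Z_3) \cong \Z^3$ to equivariant $K$-theory, while the free complement gives the ordinary $K$-theory of the orbit space. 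The count yields $K_0 = \Z^8$ and $K_1 = 0$.

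Second, I would verify that each listed generator is a genuine projection. The elements $X, Y$, and $p$ all satisfy $x^3 = \id$: for $X = e^{\pi i \theta/3} Vp$, using $\alpha(V) = e^{-\pi i \theta} V^\ast W$ and $\alpha^2(V) = W^\ast$ one computes $V \alpha(V) \alpha^2(V) = e^{-\pi i \theta}$, so $X^3 = e^{\pi i \theta} V \alpha(V) \alpha^2(V) = 1$, and analogously for $Y$. Then $Q_j(x) = \tfrac{1}{3}(1 + e^{2\pi j i/3} x + e^{4 \pi j i/3} x^2)$ is precisely the spectral projection onto the $e^{-2\pi j i/3}$-eigenspace of a unitary of order three, so the six classes $[Q_j(X)], [Q_j(Y)], [Q_j(p)]$ together with $[1]$ are well-defined in $K_0$. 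The relation $[Q_0(x)] + [Q_1(x)] + [Q_2(x)] = [1]$ explains why only the indices $j = 0, 1$ are listed.

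Third, I would separate these classes by pairing them with the invariants from Theorem \ref{injectK0}. The canonical trace $\tau$ gives $\tau([1]) = 1$, $\tau([Q_j(\cdot)]) = \tfrac{1}{3}$, and $\tau([{\cal M}_3]) = \theta/3$; the $\sigma^s$-twisted unbounded traces vanish on $[1]$ and on $[{\cal M}_3]$ while distinguishing the three projections within each triple $\{Q_j(X)\}, \{Q_j(Y)\}, \{Q_j(p)\}$ via the character values $e^{2\pi j i/3}$ at the corresponding orbifold fixed points; and the canonical cyclic two-cocycle $C$ isolates $[{\cal M}_3]$. Arranging this data into an $8 \times 8$ matrix and checking that it has integer determinant of absolute value one (after a suitable change of basis) establishes that the eight listed classes form a $\Z$-basis of $K_0$.

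The principal obstacle is the construction of the exotic module $[{\cal M}_3]$ and the computation of its pairings. In analogy with the $\mathrm{B2}_\theta$ case, one wants a $\Z_3$-invariant Powers--Rieffel projection $e_\theta$ in the smooth subalgebra of $\T^2_\theta$ of trace $\theta$, cut by a suitable idempotent in $\C[\Z_3]$. The delicate step is enforcing $\Z_3$-invariance, since the naive average $\tfrac{1}{3} \sum_{k=0}^{2} \alpha^k(e_\theta)$ is generally not idempotent; a symmetrization via functional calculus inside the smooth subalgebra, together with a careful choice of cutoff functions, is required. Matching the resulting trace values to produce an integral basis --- and in particular pinning down the relative signs, in the spirit of the parameter $\epsilon$ that appeared in the $\mathrm{B2}_\theta$ case --- is the computationally heaviest part of the argument.
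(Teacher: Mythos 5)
The paper contains no proof of this statement: it is imported verbatim from Buck and Walters \cite{BuWa1}, and the surrounding text explicitly defers all supporting detail (the unbounded traces, their values on the generators, the injectivity of the resulting Chern--Connes pairing) to \cite{Walt95,Walt00,BuWa1,BuWa2}. Your proposal is therefore a reconstruction of the cited literature rather than an alternative to anything argued in the paper, and in outline it does track how the result is actually established there: the abstract groups come from the $\theta$-independence of the $K$-theory of these crossed products (continuous fields as in \cite{ELPW}, equivalently the equivariant computation over $\T^2$ with its three $\Z_3$-fixed points), and the eight classes are separated by the canonical trace, the twisted unbounded traces, and the cyclic two-cocycle, exactly in the spirit of Theorem \ref{injectK0}.

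There are, however, two genuine gaps. First, in your third step, invertibility of the $8\times 8$ pairing matrix only shows that the eight classes are linearly independent, hence generate a \emph{finite-index} subgroup of $K_0\cong\Z^8$; to conclude they form a $\Z$-basis you must also determine the image of all of $K_0$ under the pairing (e.g.\ the range $\tfrac{1}{3}(\Z+\theta\Z)$ of the canonical trace and the corresponding lattices for the twisted traces), and no ``suitable change of basis'' substitutes for that computation --- this is precisely the hard bookkeeping carried out in \cite{BuWa1}. Second, the exotic class ${\cal M}_3$ is not obtained by symmetrizing a Powers--Rieffel projection: as you yourself observe, the average $\tfrac13\sum_k\alpha^k(e_\theta)$ is not idempotent, and unlike the flip there is no evident cubic-invariant Powers--Rieffel projection of trace $\theta$ to start from. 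The actual construction lifts the cubic transform to an explicit order-three unitary on the Heisenberg (Bott/Fourier) module $\mathcal{S}(\R)$ over $\T^2_\theta$, thereby making that finitely generated projective $\T^2_\theta$-module into a module over the crossed product; your proposed route would stall at exactly the step you flag as delicate. A smaller point: the order-three verification for $Y=e^{\frac{2}{3}\pi i\theta}V^2p$ does not close with the action of Table \ref{table1}, since $V^2\alpha(V^2)\alpha^2(V^2)=e^{-4\pi i\theta}$ gives $Y^3=e^{-2\pi i\theta}$; the normalizing scalar must be adjusted (or the generator read differently from \cite{BuWa1}), so ``analogously for $Y$'' needs an actual computation rather than an appeal to analogy.
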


\begin{lemma}\label{lemz3}
The action of the group $\Z_3$ on the above generators of $K$-theory is as follows:
$$
\begin{aligned}
&\hat{\beta}_*([1]) = [1], & \;\;\; 
&\hat{\beta}_*([{\cal M}_3]) = [{\cal M}_3] \!-\! [Q_0(p)] \!-\! [Q_0(X)] \!-\! [Q_0(Y)] \!+\! [1], \\
&\hat{\beta}_*([Q_1(x)]) = [Q_0(x)], & \;\;\; 
&\hat{\beta}_*([Q_0(x)]) = [1] - [Q_0(x)] -[Q_1(x)],
\end{aligned}
$$
for any $x = p, X, Y$.
\end{lemma}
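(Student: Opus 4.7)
\medskip

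\noindent\textbf{Proof plan for Lemma \ref{lemz3}.}
The strategy is to treat the ``elementary'' generators $[1], [Q_j(x)]$ ($x \in \{p, X, Y\}$, $j=0,1$) by a direct algebraic computation at the level of projections, and then to pin down $\hat{\beta}_*[\mathcal{M}_3]$ using the injectivity of the trace/cocycle map supplied by Theorem~\ref{injectK0}, together with the scaling rule for twisted traces from Lemma~\ref{betatrace}. Invariance of $[1]$ is obvious since $\hat{\beta}$ is an algebra automorphism.

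For the $Q_j(x)$ generators, observe first that $\hat{\beta}$ fixes $C(\T^2_\theta)$ and sends $p \mapsto \bar{\lambda} p$ with $\lambda = e^{2\pi i/3}$. Since $V$ is invariant, $\hat{\beta}(X) = e^{\pi i \theta/3} V (\bar{\lambda} p) = \bar{\lambda} X$, and likewise $\hat{\beta}(Y) = \bar{\lambda} Y$. Thus for each $x \in \{p, X, Y\}$,
$$
\hat{\beta}(Q_j(x)) = \tfrac{1}{3}\bigl(1 + \lambda^j \bar{\lambda}\, x + \lambda^{2j} \bar{\lambda}^2 x^2\bigr) = Q_{j-1}(x).
$$
Using the relation $Q_0(x) + Q_1(x) + Q_2(x) = 1$ (which holds because the $Q_j(x)$ are the spectral projections of the order-three unitary ``component'' $x$), this immediately gives
$$
\hat{\beta}_*[Q_1(x)] = [Q_0(x)], \qquad \hat{\beta}_*[Q_0(x)] = [Q_2(x)] = [1] - [Q_0(x)] - [Q_1(x)],
$$
which are the two stated formulas.

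The remaining case, $[\mathcal{M}_3]$, is the main obstacle. Since $\hat{\beta}_*$ is a group automorphism of $K_0 \simeq \Z^8$, we may write $\hat{\beta}_*[\mathcal{M}_3] = m[\mathcal{M}_3] + a[1] + \sum_{x} (c_x[Q_0(x)] + d_x[Q_1(x)])$ with integer coefficients. To identify these, one pairs both sides with the canonical trace $\tau$, the unbounded $\sigma^s$-twisted traces $\tilde{\Phi}_s$ ($s=1,2$) from \cite{BuWa1}, and the nontrivial cyclic two-cocycle $C$; by Theorem~\ref{injectK0} this collection is injective on $K_0$. Lemma~\ref{betatrace} tells us that $\tau$ is $\hat{\beta}$-invariant while $\tilde{\Phi}_s \circ \hat{\beta} = e^{2\pi i s/3}\tilde{\Phi}_s$, and the cocycle $C$ is preserved up to an invertible scalar and is nonzero only on $[\mathcal{M}_3]$ among the generators. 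The $C$-pairing forces $m = 1$, the $\tau$-pairing forces $a$ to cancel the trace contributions of the $[Q_j(x)]$ corrections, and the two equations coming from $\tilde{\Phi}_1, \tilde{\Phi}_2$ — which relate the trace values of $[\mathcal{M}_3]$ from \cite{BuWa1} to those of $[Q_0(x)], [Q_1(x)]$ — determine $c_x, d_x$ uniquely.

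The hard part is therefore purely bookkeeping: extracting from \cite{BuWa1} the explicit trace values on $[\mathcal{M}_3]$ and on each $[Q_j(x)]$, and solving the resulting integer linear system. Once done, the system has the unique solution $\hat{\beta}_*[\mathcal{M}_3] = [\mathcal{M}_3] - [Q_0(p)] - [Q_0(X)] - [Q_0(Y)] + [1]$ claimed in the lemma.
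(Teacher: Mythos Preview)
Your proposal is correct and follows essentially the same approach as the paper: the paper also disposes of the $Q_j(p),Q_j(X),Q_j(Y)$ generators directly (since $\hat\beta$ acts only on $p$, by multiplication with $\bar\lambda$) and then handles the exotic class $[\mathcal{M}_3]$ by appealing to the injectivity of the Connes--Chern map together with the tabulated trace values from \cite[Theorem~1.2]{BuWa1} and the scaling rule of Lemma~\ref{betatrace}. Your write-up simply spells out more of the algebra for the $Q_j(x)$ step (verifying $\hat\beta(x)=\bar\lambda x$ and $\hat\beta(Q_j(x))=Q_{j-1}(x)$) than the paper does; one small sharpening: the cyclic two-cocycle $C$ is in fact exactly $\hat\beta$-invariant (not merely up to a scalar), since $\hat\beta$ acts trivially on $C(\T^2_\theta)$ and the cocycle lives there---this is what cleanly forces the coefficient $m=1$.
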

\begin{proof}
Since the action is nontrivial only on the generator $p$ of the crossed
product algebra, all results concerning $Q_j(p),Q_j(X),Q_j(Y)$ are 
immediate. The only nontrivial part concerns ${\cal M}_3$.  For this we again use the 
unbounded traces and the injectivity of the associated Connes-Chern character. The
values of the traces on the generators of $K_0$ group are tabulated in 
\cite[Theorem 1.2]{BuWa1}  and from them we read out the action of $\hat{\beta}_*$.
\end{proof}

\begin{proposition}
The $K$-theory groups of $B3_\theta$ are:
$$  K_0(\mathrm{B3}_\theta) = \Z^2 \oplus \Z_3, \qq \qq 
\qq K_1(\mathrm{B3}_\theta) = \Z^2.$$
\end{proposition}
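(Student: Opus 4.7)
The plan is to apply the Pimsner--Voiculescu six-term exact sequence to the crossed-product presentation $(C(\T^2_\theta) \rtimes_\alpha \Z_3) \rtimes_{\hat{\beta}} \Z$ from (\ref{actionZ}), which has the same $K$-theory as $\mathrm{B3}_\theta$ by the Morita equivalence established earlier. Since the Buck--Walters theorem above gives $K_0(C(\T^2_\theta) \rtimes_\alpha \Z_3) = \Z^8$ with explicit generators and $K_1 = 0$, the sequence collapses to
\[
0 \to K_1(\mathrm{B3}_\theta) \to \Z^8 \xrightarrow{\id - \hat{\beta}_*} \Z^8 \to K_0(\mathrm{B3}_\theta) \to 0,
\]
so $K_1(\mathrm{B3}_\theta) = \ker(\id - \hat{\beta}_*)$ and $K_0(\mathrm{B3}_\theta)$ is its cokernel. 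Everything therefore reduces to linear algebra on $\Z^8$ with the matrix determined by Lemma~\ref{lemz3}.

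First I would write out this matrix in the ordered basis $[1], [Q_0(p)], [Q_1(p)], [Q_0(X)], [Q_1(X)], [Q_0(Y)], [Q_1(Y)], [{\cal M}_3]$. It is block-structured: the $[1]$-column is zero; for each $x \in \{p, X, Y\}$ the pair $\{[Q_0(x)], [Q_1(x)]\}$ contributes a $2\times 2$ block $\bigl(\begin{smallmatrix} 2 & -1 \\ 1 & 1 \end{smallmatrix}\bigr)$ of determinant $3$, together with a single $-1$ in the $[1]$-row of the $[Q_0(x)]$-column; and the $[{\cal M}_3]$-column is $(-1, 1, 0, 1, 0, 1, 0, 0)^T$. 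For the kernel, the three $2\times 2$ blocks force the $[Q_1(x)]$-coefficient to be the negative of the $[Q_0(x)]$-coefficient, and the $[{\cal M}_3]$-coefficient to be $-3$ times that common value for every $x$; the $[1]$-row equation then holds automatically. The free parameters are the $[1]$-coefficient and a single integer controlling $[{\cal M}_3]$, yielding $\ker = \Z^2$ and hence $K_1(\mathrm{B3}_\theta) = \Z^2$.

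For the cokernel, the relations read off column by column are $[Q_1(x)] \equiv [Q_0(x)]$ and $[1] \equiv 3[Q_0(x)]$ for each $x$, together with $[1] \equiv [Q_0(p)] + [Q_0(X)] + [Q_0(Y)]$ from the $[{\cal M}_3]$-column. Eliminating $[Q_1(x)]$ and $[1]$ reduces the problem to the quotient of $\Z^3$ on $[Q_0(p)], [Q_0(X)], [Q_0(Y)]$ by the two independent relations $3[Q_0(x)] = [Q_0(p)] + [Q_0(X)] + [Q_0(Y)]$, while the still-free $[{\cal M}_3]$ contributes a separate $\Z$ summand. The main computational step is the Smith normal form of the resulting $2 \times 3$ integer matrix; its invariant factors come out $(1, 3)$, giving $\Z \oplus \Z_3$ from the $[Q_0(x)]$-block and hence $K_0(\mathrm{B3}_\theta) = \Z^2 \oplus \Z_3$. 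The whole argument is mechanical once $\hat{\beta}_*$ is in hand; the only genuinely nontrivial ingredient is the Smith normal form at the end, and even that is essentially forced by the three determinant-$3$ blocks coupled through the $[1]$ and $[{\cal M}_3]$ columns.
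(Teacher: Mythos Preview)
Your proof is correct and follows the same route as the paper: apply Pimsner--Voiculescu to $(C(\T^2_\theta) \rtimes_\alpha \Z_3) \rtimes_{\hat{\beta}} \Z$, read off the matrix of $\id - \hat{\beta}_*$ from Lemma~\ref{lemz3}, and compute its kernel and cokernel over $\Z$. The only difference is cosmetic---you order each pair as $[Q_0(x)],[Q_1(x)]$ rather than $[Q_1(x)],[Q_0(x)]$ and you spell out the Smith-normal-form step that the paper leaves implicit.
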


\begin{proof}
>From the Pimsner-Voiculescu exact sequence:
$$
\xymatrix{
0 \ar[r] 
& 0 \ar[r]
& K_1(\mathrm{B3}_\theta)  \ar[d] \\
  K_0(\mathrm{B3}_\theta) \ar[u]  
& \ar[l] \Z^8
& \ar[l]^{\hbox{id}-\hat{\beta}_*} \Z^8
}
$$
taking into account Lemma \ref{lemz3} we see that the matrix giving the map 
$\id - \hat{\beta}_*$ on the basis of $\Z^8$ ($[1],[Q_1(p)],[Q_0(p)],
[Q_1(X)],[Q_0(X)],[Q_1(Y)],[Q_0(Y)],[{\cal M}]$) is:

$$ \id - \hat{\beta}_* = \left( 
\begin{array}{cccccccc}
0 & 0 &-1 & 0 &-1 & 0 &-1 & -1 \\
0 & 1 & 1 & 0 & 0 & 0 & 0 & 0 \\
0 &-1 & 2 & 0 & 0 & 0 & 0 &1 \\
0 & 0 & 0 & 1 & 1 & 0 & 0 & 0 \\ 
0 & 0 & 0 &-1 & 2 & 0 & 0 &1 \\
0 & 0 & 0 & 0 & 0 & 1 & 1 & 0 \\
0 & 0 & 0 & 0 & 0 &-1 & 2 &1 \\
0 & 0 & 0 & 0 & 0 & 0 & 0 & 0
\end{array} 
\right)
$$
\end{proof}
%%%%%%%%%%% HERE %%%%%%%%%%%%%%%%%%%%%%%%%%%%%
\subsubsection{$\mathrm{B4}_\theta$}

Let us begin with the following result:

\begin{lemma}[Theorem 2.1 \cite{Walt01},\cite{Walt00}]
The $K$-groups of $\T^2_\theta \rtimes \Z_4$ are
$$  K_0(\T^2_\theta \rtimes \Z_4) =\Z^9, \;\;\; K_1(\T^2_\theta \rtimes \Z_4) = 0. $$
The generators are:
$$
\begin{aligned} 
\qq  [1], \qq [Q_0(p)], \qq [Q_1(p)], \qq [Q_2(p)], 
       \qq  [Q_0(e^{\frac{\pi i \theta}{2}} Vp)], \\
[Q_1(e^{\frac{\pi i \theta}{2}} Vp)], \qq [Q_2(e^{\frac{\pi i \theta}{2}} Vp)], \qq 
[Q_0(Vp^2)], \qq [{\cal M}_4], 
\end{aligned}
$$
where
$$ Q_k(x) =\frac{1}{4}(1+ (i^k x) + (i^k x)^2 + (i^k x)^3), \qq k=0,1,2, $$
and ${\cal M}_4$ is the nontrivial module arising from the nontrivial projective 
module over the noncommutative torus.
\end{lemma}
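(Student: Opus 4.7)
The approach mirrors the blueprint already used for the $\Z_2$ and $\Z_3$ cases, so I would organise the proof in three layers: identifying the classes, producing invariants that separate them, and then bounding the K-groups from above.

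First, I would verify that the listed elements are genuine projections in $\T^2_\theta \rtimes \Z_4$. A direct computation using the defining relation $WV = e^{2\pi i\theta} VW$ together with the $\Z_4$ action $pV = Wp$, $pW = V^*p$ shows that the unitaries $x = p$, $x = e^{\pi i \theta/2} Vp$, and $x = Vp^2$ satisfy $x^4 = 1$ (with $(Vp^2)^2 = 1$, which is why only $Q_0(Vp^2)$ is listed). Since the averaging formula $Q_k(x) = \frac{1}{4}(1+(i^k x)+(i^k x)^2 + (i^k x)^3)$ produces a self-adjoint idempotent whenever $x^4 = 1$, all eight "small" classes are well defined. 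The class $[{\cal M}_4]$ comes from a $\Z_4$-invariant Powers-Rieffel projection in $\T^2_\theta$, which exists because the $\Z_4$ action preserves the canonical trace and hence preserves the range of trace values on projections.

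Second, to show that these nine classes are linearly independent in $K_0$, I would promote the unbounded twisted traces constructed by Walters and Buck-Walters on $\T^2_\theta$ to honest traces $\tilde\Phi_s$ on the crossed product via the formula in the remark preceding Theorem \ref{injectK0}, once for each $s = 1,2,3$. Combining these with the canonical trace $\tau$ and the canonical cyclic two-cocycle $C$ from the curvature form of $\T^2_\theta$, one obtains a Chern-Connes character $K_0(\T^2_\theta \rtimes \Z_4) \to \C^{r(4)}$; evaluation on the nine generators yields a matrix of full rank $9$ (the value of $\tau$ on ${\cal M}_4$ is a nonzero multiple of $\theta$, while $C$ pairs nontrivially only with $[{\cal M}_4]$, separating it from the torsion-free $Q_k(x)$ contributions). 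This supplies the lower bound $\mathrm{rank}\, K_0 \geq 9$.

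Third, to pin down the exact isomorphism type and to establish $K_1 = 0$, I would apply the six-term Pimsner-Voiculescu sequence to the presentation $\T^2_\theta = \T_\theta \rtimes_\alpha \Z$, pushed through the $\Z_4$ crossed product using Lemma \ref{across} to swap the order of the two crossed products. Together with the rigidity of K-theory under the continuous deformation in $\theta$ (comparing to the classical orbifold $\T^2/\Z_4$, the pillowcase with two singular points of order $4$ and one of order $2$), this both caps the rank at $9$ and forces $K_1 = 0$. The main obstacle is precisely this upper-bound half: the traces and the cyclic cocycle give only a lower bound, and extracting the matching upper bound requires carefully identifying the connecting maps in the exact sequence and checking that no extra torsion survives. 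This is the technical heart of the argument carried out in full in \cite{Walt00, Walt01}, which we invoke rather than reproduce.
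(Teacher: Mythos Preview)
The paper does not actually prove this lemma; it is quoted verbatim as Theorem~2.1 of \cite{Walt01} (with trace data from \cite{Walt00}), and no argument is supplied beyond the citation. So in that sense your proposal already does more than the paper: you sketch why the listed elements are projections, why traces separate them, and where an upper bound should come from, before deferring to the same references.

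Your first two layers are fine and match the philosophy of \cite{Walt00,Walt01}: the verification that $p$, $e^{\pi i\theta/2}Vp$ and $Vp^2$ are fourth roots of unity is correct (your check that $(Vp^2)^2=1$ is right, explaining why only one $Q_k$ of that element appears), and the Connes--Chern pairing with the canonical trace, the unbounded $\sigma^s$-twisted traces, and the curvature two-cocycle is exactly how Walters establishes injectivity and hence the lower bound on the rank.

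Your third layer, however, has a genuine gap. You propose to write $\T^2_\theta$ as a crossed product by $\Z$ and then invoke Lemma~\ref{across} to swap the $\Z$ and $\Z_4$ crossed products. That lemma requires the $\Z_N$ action to restrict on the $\Z$-generator to multiplication by a root of unity. This holds in the paper's main setup (the $\Z_N$ action on $U$ in $\T^3_\theta = \T^2_\theta \rtimes_U \Z$ is scalar), but it fails here: the Fourier automorphism sends $V\mapsto W$ and $W\mapsto V^*$, so neither generator of any $C(S^1)\rtimes\Z$ presentation of $\T^2_\theta$ is sent to a scalar multiple of itself. The hypotheses of Lemma~\ref{across} are simply not met, and Pimsner--Voiculescu cannot be applied in the way you describe. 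Walters' actual route to the upper bound in \cite{Walt01} instead factors the $\Z_4$ action through its $\Z_2$ subgroup (the flip $p^2$), writing $\T^2_\theta\rtimes\Z_4 \cong (\T^2_\theta\rtimes\Z_2)\rtimes\Z_2$ and leveraging the already-known $K$-theory of the symmetrized torus; alternatively the deformation/Baum--Connes argument of \cite{ELPW} that you allude to gives the bound, but that is a separate mechanism from Lemma~\ref{across}. Since you ultimately invoke \cite{Walt00,Walt01} anyway, this does not break your overall argument, but the specific reduction you name would not work.
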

      
Again, using an injective morphism coming from the Chern-Connes character from 
$K_0(\T^2_\theta \lcross \Z_4)$ to $\R^5 \times \C^2$ (that does not come as a surprise 
as the action of $\hat{\beta}$ is in case of some of the unbounded traces multiplication 
by $\pm i$) an the explicit computation of the traces \cite[page 640]{Walt00} we obtain 
the following result:

\begin{lemma}\label{lemz4}
The action of the group $\Z$ on the above generators of $K$-theory is:
\begin{align*}
&\hat{\beta}_*([1]) = [1], \qq \qq
& \hat{\beta}_*([Q_0(Vp^2)]) = [1] - [Q_0(Vp^2)], \\
&\hat{\beta}_*([Q_2(x)]) = [Q_1(x)], 
&\hat{\beta}_*([Q_0(x)]) =[1] - [Q_0(x)] - [Q_1(x)] - [Q_2(x)],  \\
&\hat{\beta}_*([Q_1(x)]) = [Q_0(x)], 
&\hat{\beta}_*([{\cal M}_4]) = [{\cal M}_4] \!-\! [Q_0(Vp^2)] \!-\! [Q_0(p)] \!-\! 
[Q_0(e^{\frac{\pi i \theta}{2}}Vp)]\!+\![1], 
\end{align*}

for any $x = p, e^{\frac{\pi i \theta}{2}} Vp$.
\end{lemma}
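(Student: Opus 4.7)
The plan is to mirror the strategy used for the $\Z_3$ case in Lemma \ref{lemz3}. For all generators except $[{\cal M}_4]$, the action of $\hat{\beta}_*$ can be read off by direct algebraic manipulation from the defining formulas $\hat{\beta}(a)=a$ for $a\in C(\T^2_\theta)$ and $\hat{\beta}(p)=\bar{\lambda}\,p=-ip$. For $[{\cal M}_4]$ the argument will instead proceed through Theorem \ref{injectK0} together with Lemma \ref{betatrace}, since that class is the only one not accessible by direct manipulation of the defining projection.

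For $x=p$ or $x=e^{\pi i\theta/2}Vp$, I would first note that $\hat{\beta}(x)=-ix$. Substituting into $Q_k(x)=\tfrac{1}{4}\sum_{j=0}^{3}(i^k x)^j$ gives $\hat{\beta}(Q_k(x))=Q_{k-1}(x)$, with indices read mod $4$. This immediately yields $\hat{\beta}_*([Q_2(x)])=[Q_1(x)]$ and $\hat{\beta}_*([Q_1(x)])=[Q_0(x)]$; the formula for $Q_0$ then follows from the partition-of-unity relation $\sum_{k=0}^{3}Q_k(x)=1$, which forces $\hat{\beta}_*([Q_0(x)])=[Q_3(x)]=[1]-[Q_0(x)]-[Q_1(x)]-[Q_2(x)]$. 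The $Q_0(Vp^2)$ case is qualitatively different: since $p^2 V p^{-2}=\alpha^2(V)=V^*$, one has $(Vp^2)^2=V\alpha^2(V)\,p^4=VV^*=1$, so $Q_0(Vp^2)=\tfrac{1}{2}(1+Vp^2)$ is actually a $\Z_2$-type projection. Applying $\hat{\beta}$ sends $Vp^2\mapsto -Vp^2$, giving $\hat{\beta}(Q_0(Vp^2))=\tfrac{1}{2}(1-Vp^2)=1-Q_0(Vp^2)$, which is the claimed identity in $K_0$.

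The main obstacle is the formula for $\hat{\beta}_*([{\cal M}_4])$, since ${\cal M}_4$ is described only through the exotic projective module and not by a simple algebraic expression. Here I would invoke Theorem \ref{injectK0}: the combined Chern-Connes character, built from the canonical trace $\tau$, the family of unbounded $\sigma^s$-twisted traces $\tilde{\Phi}_s$ of \cite{Walt00}, and the nontrivial cyclic two-cocycle, provides an injective map out of $K_0$. Hence it suffices to verify that both sides of the proposed equation have identical pairings against all these functionals. Lemma \ref{betatrace} gives $\tilde{\Phi}_s(\hat{\beta}(a))=e^{2\pi i s/4}\tilde{\Phi}_s(a)=i^{s}\tilde{\Phi}_s(a)$, while $\tau$ is $\hat{\beta}$-invariant and the cocycle pairing is also $\hat{\beta}_*$-invariant (its pairing with $[{\cal M}_4]$ being the only nonzero one among the generators). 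Reading off the trace values on the generators from the tables in \cite[p.~640]{Walt00}, the verification reduces to checking a small linear system over $\Z$: the correction term $-[Q_0(Vp^2)]-[Q_0(p)]-[Q_0(e^{\pi i\theta/2}Vp)]+[1]$ must reproduce exactly the differences $(i^{s}-1)\tilde{\Phi}_s([{\cal M}_4])$ for each $s=1,2,3$. The principal risk is in correctly tracking the phases and signs in the twisted trace tables; once the bookkeeping is done, injectivity of the Chern-Connes character closes the argument.
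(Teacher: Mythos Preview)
Your proposal is correct and follows essentially the same route as the paper. The paper's argument for this lemma is extremely terse --- it simply points to the injective Chern--Connes character and the trace tables in \cite[p.~640]{Walt00} --- while you spell out, as the paper does for the $\Z_3$ case in Lemma~\ref{lemz3}, that the $Q_k(x)$ and $Q_0(Vp^2)$ formulas follow from direct algebraic manipulation of $\hat{\beta}(p)=-ip$, reserving the trace-table argument for the exotic class $[{\cal M}_4]$ alone.
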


\begin{proposition}
The $K$-theory groups of $B4_\theta$ are:
$$  K_0(\mathrm{B4}_\theta) = \Z^2 \oplus \Z_2, \qq \qq 
\qq K_1(\mathrm{B4}_\theta) = \Z^2,
$$
\end{proposition}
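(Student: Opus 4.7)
The plan is to repeat the template used for $\mathrm{B2}_\theta$ and $\mathrm{B3}_\theta$. First, the Morita equivalence recorded just after Lemma~\ref{across} reduces the computation of $K_\ast(\mathrm{B4}_\theta)$ to that of the iterated crossed product $(C(\T^2_\theta)\rtimes_\alpha\Z_4)\rtimes_{\hat\beta}\Z$. Since $K_0(C(\T^2_\theta)\rtimes_\alpha\Z_4)=\Z^9$ and $K_1(C(\T^2_\theta)\rtimes_\alpha\Z_4)=0$, the Pimsner--Voiculescu six-term sequence collapses to the short exact sequence
$$ 0 \longrightarrow K_1(\mathrm{B4}_\theta) \longrightarrow \Z^9 \stackrel{\id-\hat\beta_*}{\longrightarrow} \Z^9 \longrightarrow K_0(\mathrm{B4}_\theta) \longrightarrow 0,$$
so the two $K$-groups are precisely the kernel and the cokernel of $\id-\hat\beta_*$.

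Next, I would read off the $9\times 9$ integer matrix of $\id-\hat\beta_*$ directly from Lemma~\ref{lemz4} in the ordered basis $[1]$, $[Q_0(p)]$, $[Q_1(p)]$, $[Q_2(p)]$, $[Q_0(e^{\frac{\pi i\theta}{2}}Vp)]$, $[Q_1(e^{\frac{\pi i\theta}{2}}Vp)]$, $[Q_2(e^{\frac{\pi i\theta}{2}}Vp)]$, $[Q_0(Vp^2)]$, $[{\cal M}_4]$. The matrix has a convenient sparse structure: the first column is zero since $[1]$ is $\hat\beta_*$-fixed; the last row is zero since only $[{\cal M}_4]$ maps onto $[{\cal M}_4]$, and it does so via the identity; each of the two $\Z_4$-orbit triples $\{Q_j(p)\}$ and $\{Q_j(e^{\frac{\pi i\theta}{2}}Vp)\}$ contributes a $4\times 3$ block of identical shape (directly analogous to the $\Z_3$-blocks appearing in the $\mathrm{B3}_\theta$ matrix); the generator $[Q_0(Vp^2)]$ contributes an isolated column with the two entries $-1$ and $2$; and the column coming from $[{\cal M}_4]$ inserts $\pm 1$ couplings between $[1]$ and the three $[Q_0(\cdot)]$ classes.

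The third step is a Smith normal form calculation. Solving $(\id-\hat\beta_*)v=0$ yields exactly two free parameters, one from $[1]$ and one from a balanced linear combination involving the remaining generators, whence $K_1(\mathrm{B4}_\theta)=\Z^2$. For the cokernel, elementary row and column operations diagonalise each $\Z_4$-orbit block (contributing only units to the Smith form), and the residual bottom-right block coupling $[Q_0(Vp^2)]$ with the tails of the orbit columns and the $[{\cal M}_4]$ column reduces to $\mathrm{diag}(1,2)$ with a trailing zero column. Combined with the $\Z^2$ free summand furnished by the zero first column and the zero last row, this produces $K_0(\mathrm{B4}_\theta)=\Z^2\oplus\Z_2$.

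The main obstacle is the bookkeeping in the Smith normal form reduction of the $9\times 9$ matrix, and in particular ensuring that the sole nontrivial invariant factor $2$, which originates from the interaction of the $[{\cal M}_4]$ column with the $[Q_0(Vp^2)]$ column (whose diagonal entry in $\id-\hat\beta_*$ is $2$), is tracked correctly and not inadvertently absorbed into the units produced by clearing the two $\Z_4$-orbit blocks.
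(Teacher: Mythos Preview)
Your proposal is correct and follows essentially the same approach as the paper: reduce to the iterated crossed product via Morita equivalence, run the Pimsner--Voiculescu sequence using $K_1(C(\T^2_\theta)\rtimes\Z_4)=0$, read off the $9\times 9$ matrix of $\id-\hat\beta_*$ from Lemma~\ref{lemz4}, and compute kernel and cokernel. The paper's own proof is in fact terser than yours---it simply displays the matrix (with the basis ordered $[1],[Q_2(p)],[Q_1(p)],[Q_0(p)],\ldots$ rather than your $Q_0,Q_1,Q_2$ ordering) and leaves the Smith normal form computation implicit, so your extra detail on the block structure and the origin of the lone invariant factor $2$ is welcome.
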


\begin{proof}
>From the Pimsner-Voiculescu exact sequence:
$$
\xymatrix{
0 \ar[r] 
& 0 \ar[r]
& K_1(\mathrm{B4}_\theta)  \ar[d] \\
  K_0(\mathrm{B4}_\theta) \ar[u]  
& \ar[l] \Z^9
& \ar[l]^{\hbox{id}-\hat{\beta}_*} \Z^9
}
$$
using (\ref{lemz4}) we see that the matrix giving $\id - \hat{\beta}_*$ on 
the basis of $K_0(C(T^2_\theta) \rtimes \Z_4)$ 
($[1],[Q_2(p)],[Q_1(p)],[Q_0(p)],[Q_2(e^{\frac{\pi i \theta}{2}}Vp)],
[Q_1(e^{\frac{\pi i \theta}{2}}Vp)],[Q_0(e^{\frac{\pi i \theta}{2}}Vp)] ,[Q_0(Vp^2)],[{\cal M}]$)
is:

$$ \id - \hat{\beta}_* = \left( 
\begin{array}{ccccccccc}
0 & 0 & 0 & -1 & 0 & 0 & -1 &-1 &-1\\
0 & 1 & 0 & 1  & 0 & 0 & 0 & 0 &  0\\
0 &-1 & 1 & 1 & 0 & 0 & 0 & 0 & 0\\
0 & 0 &-1 & 2 & 0 & 0 & 0 & 0 & 1\\ 
0 & 0 & 0 & 0 & 1 & 0 & 1 & 0 & 0\\
0 & 0 & 0 & 0 &-1 & 1 & 1 & 0 & 0\\
0 & 0 & 0 & 0 & 0 &-1 & 2 & 0 & 1\\
0 & 0 & 0 & 0 & 0 & 0 & 0 & 2 & 1\\
0 & 0 & 0 & 0 & 0 & 0 & 0 & 0 & 0
\end{array} 
\right)
$$
\end{proof}
%%%%%%%%%%%%%%%%%%%%%%%%%%%%%%%%%%%%%%%%%%%%%%%%%%%%%%%%%%%%%%%%%%%%%%%%
%%%%%%%%%%%%%%%%%%%%%%%%%%%%%%%%%%%%%%%%%%%%%%%%%%%%%%%%%%%%%%%%%%%%%%%%
\subsubsection{$\mathrm{B6}_\theta$}

Similarly as in the case of cubic transform we use the results of hexic
transform \cite{BuWa1,BuWa2}.

\begin{theorem}[\cite{BuWa1}, Theorem 1.1] 
The $K$-theory groups and generators of $C(T^2_\theta) \lcross \Z_6$ are:
$$  K_0(\T^2_\theta \lcross \Z_6) =\Z^{10}, \;\;\; K_1(\T^2_\theta \lcross \Z_6) = 0, $$
with the generators of $K_0$ group:
$$ [1], [{\cal M}_6], [Q_0(e^{\frac{\pi i}{3}} V p^2)], [Q_2(e^{\frac{\pi i}{3}} V p^2)], [Q_0(Vp^3)], 
[Q_n(p)], n=0,1,2,3,4,$$ 
where:
$$ Q_n(x) = \frac{1}{6} \sum_{k=0}^5  e^{\frac{2\pi n k i}{3}} x^k, \qq\qq n=0,1,2,3,4. $$
and the generator ${\cal M}_6$ is again the exotic one.
\end{theorem}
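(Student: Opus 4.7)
The plan is to compute $K_*(C(\T^2_\theta) \lcross \Z_6)$ by mirroring the treatment of the $\Z_3$ and $\Z_4$ cases from the preceding subsections: identify ten explicit generators in $K_0$ as spectral projections of the group generator and of certain twisted products with $V$, together with an exotic Powers-Rieffel module, then establish linear independence via an injective Chern-Connes character, and finally certify $K_1 = 0$ using Pimsner-Voiculescu.

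First, I would construct the candidate generators explicitly. The unitary $p$ of order $6$ admits spectral projections $Q_n(p)$, $n = 0,\ldots,5$, with $\sum_n Q_n(p) = [1]$, so only five are independent and together with $[1]$ they provide six classes. Next, the $\Z_6$-equivariance combined with the noncommutative torus relations force suitable powers of $Vp^3$ and $e^{\pi i/3} Vp^2$ to become invariant elements of the rotation algebra; this produces the spectral projections $[Q_0(Vp^3)]$, $[Q_0(e^{\pi i/3} Vp^2)]$, and $[Q_2(e^{\pi i/3} Vp^2)]$, with the remaining $Q_1$-class expressible as a linear combination of the others. The tenth generator $[{\cal M}_6]$ arises by averaging a $\Z_6$-invariant Powers-Rieffel projection of trace $\theta/6$ against the $\Z_6$ action, exactly analogous to the exotic modules appearing in the $\Z_3$ and $\Z_4$ subsections.

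Linear independence, and hence $K_0 \cong \Z^{10}$ with no torsion, would follow from the Walters-Buck tracial machinery: for each proper divisor $s \in \{1,2,3\}$ of $6$ the rotation algebra carries a family of $\sigma^s$-twisted unbounded traces that extend to honest traces $\tilde{\Phi}_s$ on the crossed product by the averaging formula in the Remark preceding Lemma \ref{betatrace}. Together with the canonical trace $\tau$ and the nontrivial cyclic $2$-cocycle $C$ detecting the Powers-Rieffel class, these characters assemble into a Chern-Connes map $K_0 \to \R \oplus \C^r$ which is injective; evaluating it on the ten candidates produces a full-rank matrix. For $K_1$, I would exploit the decomposition $\Z_6 = \langle p^3 \rangle \times \langle p^2 \rangle \cong \Z_2 \times \Z_3$ to realize $C(\T^2_\theta) \lcross \Z_6 \cong (C(\T^2_\theta) \lcross \Z_3) \lcross \Z_2$ and apply the Pimsner-Voiculescu sequence to the induced $\Z$-action under Lemma \ref{across}; since $K_1(C(\T^2_\theta) \lcross \Z_3) = 0$ from the preceding theorem, the six-term sequence forces $K_1$ of the $\Z_6$ crossed product to be a subquotient of zero.

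The principal obstacle is the treatment of the exotic module $[{\cal M}_6]$: unlike the $Q_n$ classes, whose trace values follow from elementary identities in the crossed product, the Powers-Rieffel projection must be chosen compatibly with the $\Z_6$ action, and its independence from the spectral projections is established only by a nonzero pairing with the cyclic $2$-cocycle $C$. Verifying this pairing, which is the core of the Buck-Walters analysis in \cite{BuWa1,BuWa2}, requires an explicit construction of the idempotent inside the smooth subalgebra and a delicate control of the averaging procedure under the order-$6$ rotation.
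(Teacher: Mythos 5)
This statement is not proved in the paper at all: it is imported verbatim as Theorem~1.1 of \cite{BuWa1}, and the paper only uses its conclusions as input to the Pimsner--Voiculescu computation for $\mathrm{B6}_\theta$. Your description of the candidate generators and of the Buck--Walters tracial machinery is a fair account of what those references do, but your argument contains a genuine gap in precisely the part that makes the theorem hard, namely the determination of the abstract groups $K_0\cong\Z^{10}$ and $K_1=0$.

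The proposed derivation of $K_1=0$ from the decomposition $C(\T^2_\theta)\lcross\Z_6\cong(C(\T^2_\theta)\lcross\Z_3)\lcross\Z_2$ does not work: Pimsner--Voiculescu applies to crossed products by $\Z$, and Lemma~\ref{across} only interchanges an existing $\Z$-crossed product with a $\Z_N$-crossed product --- it does not convert the outer $\Z_2$-action into a $\Z$-action, so there is no ``induced $\Z$-action'' to feed into the six-term sequence. Moreover, if such a sequence did exist it would give the wrong answer: from $K_0(C(\T^2_\theta)\lcross\Z_3)=\Z^8$ and $K_1=0$, the PV sequence would force $K_0$ of the $\Z_2$-crossed product to be a quotient of $\Z^8$, of rank at most $8$, contradicting the claimed rank $10$. (The same failure is visible one level down: $K_0(C(\T^2_\theta))=\Z^2$ while $K_0(C(\T^2_\theta)\rtimes\Z_2)=\Z^6$.) Computing $K_*$ of a crossed product by a \emph{finite} group genuinely requires other tools --- the explicit structure results of \cite{ELPW} (via Baum--Connes for $\Z^2\rtimes\Z_6$ and $\theta$-independence) or the direct constructions of \cite{BuWa1,BuWa2}. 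This gap also undermines your independence argument: injectivity of the Chern--Connes character on your ten classes only shows they span a copy of $\Z^{10}$ inside $K_0$; to conclude that they \emph{generate} $K_0$ and that there is no torsion you need the abstract computation of $K_0$ that you were attempting to extract from Pimsner--Voiculescu, so as organized the argument is circular at exactly this point.
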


\begin{lemma}\label{lemz6}
The action of the group $\Z$ through $\hat{\beta}_*$ on the above generators 
of $K$-theory is:
$$
\begin{aligned}
& \hat\beta_*([1]) = [1], &&\hat\beta_*([Q_0(Vp^3)]) = [1] - [Q_0(Vp^3)], \\
&\hat\beta_*([Q_{n+1}(p)]) = [Q_n(p)], \qq n=1,2,3,4, \;\;\;\; &&\hat\beta_*([Q_2(e^{\frac{\pi i}{3}} V p^2) = [Q_0(e^{\frac{\pi i}{3}} V p^2)], \\
&\hat\beta_*([Q_0(p)) = [1] - \sum_{k=0}^4 [Q_k(p)], &&
\\
\end{aligned}
$$
$$
\begin{aligned}
&\beta_*([{\cal M}_6]) = [{\cal M}_6]  - [Q_0(p)] -[Q_0(e^{\frac{\pi i}{3}} V p^2)] - [Q_0(Vp^3)] +[1], \\
&\beta_*([Q_0(e^{\frac{\pi i}{3}} V p^2)]) = [1]-[Q_0(e^{\frac{\pi i}{3}} V p^2)] 
- [Q_2(e^{\frac{\pi i}{3}} V p^2)], 
\end{aligned}
$$
\end{lemma}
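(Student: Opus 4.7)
The plan is to reproduce in the $\mathrm{B6}_\theta$ setting the strategy already deployed for Lemmas \ref{lemz3} and the $\mathrm{B4}_\theta$ case: split the ten generators into those on which $\hat{\beta}$ acts by a direct algebraic manipulation of $p$, $V$, $W$, and the single exotic generator $[{\cal M}_6]$, whose image must be pinned down via the injective Chern--Connes character built from the traces.

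First I would handle the spectral projections $Q_n(x)$. From (\ref{bhat}), $\hat{\beta}$ fixes every element of $C(\T^2_\theta)$ and multiplies the $\Z_6$-generator $p$ by $\bar{\lambda}=e^{-\pi i/3}$. Consequently $\hat{\beta}(e^{\pi i/3}Vp^2)=\bar{\lambda}^{\,2}(e^{\pi i/3}Vp^2)$ and $\hat{\beta}(Vp^3)=\bar{\lambda}^{\,3}Vp^3=-Vp^3$. Substituting these into the defining sum for $Q_n(x)$ and collecting phases yields the cyclic shift $\hat{\beta}_*([Q_{n+1}(p)])=[Q_n(p)]$, $\hat{\beta}_*([Q_2(e^{\pi i/3}Vp^2)])=[Q_0(e^{\pi i/3}Vp^2)]$, and $\hat{\beta}_*([Q_0(Vp^3)])=[1]-[Q_0(Vp^3)]$ (the last one because the two complementary spectral projections of the order-two element $Vp^3$ sum to $[1]$). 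The relations for $[Q_0(p)]$ and $[Q_0(e^{\pi i/3}Vp^2)]$ then follow by writing the missing spectral projection as $[1]$ minus the sum of the others in the corresponding family.

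The genuinely non-algebraic part is the determination of $\hat{\beta}_*([{\cal M}_6])$. Here I would invoke Theorem \ref{injectK0}: the canonical trace $\tau$, the unbounded traces $\tilde{\Phi}_s$ for $s=1,\ldots,5$, and the nontrivial cyclic two-cocycle $C$ together provide an injective morphism from $K_0(\T^2_\theta\lcross\Z_6)$ into a finite product of copies of $\C$. By Lemma \ref{betatrace}, $\tilde{\Phi}_s(\hat{\beta}_*([{\cal M}_6]))=e^{2\pi is/6}\,\tilde{\Phi}_s([{\cal M}_6])$, while $\tau$ is $\hat{\beta}$-invariant and the pairing with $C$ is controlled since $C$ detects only the exotic generator. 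Combined with the tabulated values of these traces on the full list of $K_0$-generators from \cite{BuWa1,BuWa2}, this yields a unique $\Z$-linear combination, which matches the claimed expression.

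The main obstacle will be bookkeeping rather than any conceptual difficulty: one has to match the trace normalisations and phase conventions of \cite{BuWa1,BuWa2} to the present choice of generators (in particular the factor $e^{\pi i/3}$ absorbed into $Vp^2$, and the decision of which eigenspace of $p$ is labelled $Q_0$ rather than $Q_5$), and then verify that inverting the trace pairing returns integer coefficients consistent with the action on the $Q_n$ family computed in the preceding step. Once these identifications are aligned, no ingredients beyond those already used in the $\mathrm{B3}_\theta$ and $\mathrm{B4}_\theta$ cases are required.
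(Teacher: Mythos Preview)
Your proposal is correct and follows essentially the same route as the paper: the $Q_n(x)$ generators are handled by the direct substitution $\hat{\beta}(p)=\bar{\lambda}p$, and the exotic class $[{\cal M}_6]$ is identified via the injective Chern--Connes pairing of Theorem~\ref{injectK0}, Lemma~\ref{betatrace}, and the trace table from \cite{BuWa1}. The paper's proof is terser---it explicitly singles out only $Q_n(p)$ as ``immediate'' and leaves the remaining generators to the trace method---whereas you push the direct algebraic computation further to cover $Q_n(e^{\pi i/3}Vp^2)$ and $Q_0(Vp^3)$ as well; this is a cosmetic difference, not a different argument.
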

\begin{proof}
Again the action is immediate to read on the generators $Q_n(p)$, whereas using the 
property of the twisted traces, their behavior under $\hat{\beta}$ and the explicit
table giving giving the values of these traces on the generators \cite[Theorem 1.1]{BuWa1}
we obtain the relations above, in particular the highly nontrivial part concerns $[{\cal M}_6]$. 
\end{proof}
%%%%%%%%%% HERE HERE %%%%%%%%%%%%%%%%%%%%%%%%%%%%%%
\begin{proposition}
The $K$-theory groups of ${\mathrm B6}_\theta$ are:
$$  K_0(\mathrm{B6}_\theta) = \Z^2 , \qq \qq 
\qq K_1(\mathrm{B6}_\theta) = \Z^2. $$
\end{proposition}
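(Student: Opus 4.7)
The strategy is the same as in the previous three cases: apply the Pimsner--Voiculescu exact sequence to the crossed product $(C(\T^2_\theta)\rtimes\Z_6)\rtimes_{\hat\beta}\Z$, which by the Morita equivalence established in the corollary computes the $K$-theory of $\mathrm{B6}_\theta$. Since $K_1(\T^2_\theta\lcross\Z_6)=0$ and $K_0(\T^2_\theta\lcross\Z_6)=\Z^{10}$, the six-term sequence collapses to
$$
0 \longrightarrow K_1(\mathrm{B6}_\theta) \longrightarrow \Z^{10}
\xrightarrow{\;\id-\hat\beta_*\;} \Z^{10} \longrightarrow K_0(\mathrm{B6}_\theta) \longrightarrow 0,
$$
so that the problem reduces to computing the kernel and cokernel of $\id-\hat\beta_*$ on the explicit basis listed above the lemma.

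First I would write down the $10\times 10$ integer matrix of $\id-\hat\beta_*$ by reading off the formulas in Lemma~\ref{lemz6}. Two features are visible immediately: the column of $[1]$ is zero (because $\hat\beta_*[1]=[1]$), and the row of $[\mathcal{M}_6]$ is zero (because no other generator contributes to the $[\mathcal{M}_6]$-component of any $\hat\beta_*$-image). Consequently $[1]$ survives in $\ker(\id-\hat\beta_*)$ and $[\mathcal{M}_6]$ survives in $\operatorname{coker}(\id-\hat\beta_*)$, contributing one $\Z$ summand to each side.

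Second, the matrix splits into three almost independent blocks corresponding to the three ``Fourier towers'' generated by $p$, by $e^{\pi i\theta/3}Vp^2$, and by $Vp^3$, coupled to $[1]$ and $[\mathcal M_6]$ only through the boundary columns. Within the $[Q_n(p)]$-block the shift $[Q_{n+1}(p)]\mapsto[Q_n(p)]$ combined with the closing relation $\hat\beta_*[Q_0(p)]=[1]-\sum_{k=0}^4[Q_k(p)]$ makes $\id-\hat\beta_*$ an almost-companion-like matrix, and one sees directly that $[Q_0(p)]+[Q_1(p)]+\cdots+[Q_4(p)]-\tfrac{?}{\cdot}[1]$ is not in the kernel while its image ``eats'' one copy of $\Z$; similar bookkeeping applies to the two-term $e^{\pi i\theta/3}Vp^2$ block (cyclic shift of order $3$, giving a $\Z_3$-like contribution which, combined with the $[\mathcal{M}_6]$ correction, gets absorbed) and to the one-term $Vp^3$ block (which contributes a factor $2$, again absorbed by the $[\mathcal{M}_6]$-column).

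Finally, I would reduce the matrix to Smith normal form. The bookkeeping above suggests that after elementary row and column operations one obtains diagonal entries $(1,1,1,1,1,1,1,1,0,0)$, giving $\ker(\id-\hat\beta_*)\cong\Z^2$ and $\operatorname{coker}(\id-\hat\beta_*)\cong\Z^2$; the exact cancellation between the Fourier blocks and the $[\mathcal{M}_6]$-column is what is responsible for the absence of any torsion in $K_0(\mathrm{B6}_\theta)$, in contrast to what happened for $\mathrm{B2}_\theta$, $\mathrm{B3}_\theta$, $\mathrm{B4}_\theta$. The conceptual step is the setup; the only real obstacle is the careful Smith normal form reduction, which is routine but must be done accurately to confirm that $\Z_6$ is genuinely torsion-free on the $K_0$ side.
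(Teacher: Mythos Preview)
Your proposal is correct and follows essentially the same route as the paper: set up the Pimsner--Voiculescu sequence, use $K_1(\T^2_\theta\lcross\Z_6)=0$ to collapse it, write down the $10\times10$ matrix of $\id-\hat\beta_*$ from Lemma~\ref{lemz6}, and read off kernel and cokernel. The paper simply displays the matrix and leaves the linear algebra to the reader; your added commentary on the block structure and on why the $[\mathcal{M}_6]$-column kills the would-be torsion (so that the Smith normal form is $(1,\dots,1,0,0)$) is a helpful elaboration of exactly that omitted computation, not a different argument.
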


\begin{proof}
>From the Pimsner-Voiculescu exact sequence:
$$
\xymatrix{
0 \ar[r] 
& 0 \ar[r]
& K_1(\mathrm{B5}_\theta)  \ar[d] \\
  K_0(\mathrm{B5}_\theta) \ar[u]  
& \ar[l] \Z^{10}
& \ar[l]^{\hbox{id}-\beta_*} \Z^{10}
}
$$
taking into account (\ref{lemz6}) we see that the matrix giving the map $\id - \beta_*$
on the basis of $K_0(C(\T^2_\theta) \lcross \Z_6$ (in the following order) 
$([1], [Q_4(p)], [Q_3(p)],[Q_2(p)],[Q_1(p)],$ $[Q_0(p)],[Q_2(e^{\frac{\pi i}{3}} V p^2)],
[Q_0(e^{\frac{\pi i}{3}} V p^2)], [Q_0(Vp^3)],[{\cal M}_6])$
is:

$$ \id - \beta_* = \left( 
\begin{array}{cccccccccc}
%1   
0 & 0 & 0 & 0 & 0 &-1 & 0 &-1 &-1 & -1\\
%p0  
0 & 1 & 0 & 0 & 0 & 1 & 0 & 0 & 0 & 0\\
%p1  
0 &-1 & 1 & 0 & 0 & 1 & 0 & 0 & 0 &0\\
%p2  
0 & 0 &-1 & 1 & 0 & 1 & 0 & 0 & 0 & 0\\
%p3  
0 & 0 & 0 &-1 & 1 & 1 & 0 & 0 & 0 & 0\\ 
%p4  
0 & 0 & 0 & 0 &-1 & 2 & 0 & 0 & 0 & 1\\
%q0  
0 & 0 & 0 & 0 & 0 & 0 & 1 & 1 & 0 & 0\\
%q1  
0 & 0 & 0 & 0 & 0 & 0 &-1 & 2 & 0 &1\\
%r   
0 & 0 & 0 & 0 & 0 & 0 & 0 & 0 & 2 & 1\\
%M6  
0 & 0 & 0 & 0 & 0 & 0 & 0 & 0 & 0 & 0
\end{array} 
\right)
$$
\end{proof}

\section{K-theory of classical Bieberbach manifolds}

Although the computations we have presented were for the specific case of an irrational
value of $\theta$ the method works , slightly modified, for the rational case.. In particular, 
the $K$-theory groups of $C(\T^2_\theta) \lcross
 \Z_n$ and their generators are independent 
of $\theta$ remain unchanged (see \cite{ELPW}), which follows from the fact that these are 
twisted group algebras and their K-theory groups depend on the homotopy class of twisting 
cocycle, which in this case are, of course, trivial. 

Clearly, the explicit form of the generator of the nontrivial module over $C(\T^2_\theta)$ very 
much depends on whether $\theta$ is rational. The crucial difference between the rational and 
irrational case is that to have an injective morphism from the $K_0$ group of $C(\T^2_\theta) 
\lcross \Z_N$ into $\R^{r(N)}$ one needs to use the nontrivial Chern-Connes character (called 
second-order Chern character by Walters in \cite{Walt00})
coming from the cyclic two-cocycle over $\T^2_\theta$.

For $\Z_2$ the original result of Walters \cite{Walt95} is for the value of $\theta \in \R \setminus \Q$ 
but it is easy to see that the arguments are valid as well for rational $\theta$. 
For $\Z_4$ the result in \cite{Walt00} is valid for all $\theta \in \R$, and for $\Z_3$ 
the results are combined in the papers of Buck and Walters \cite{BuWa1} 
and \cite{BuWa2}, where, again, they are obtained for any value of $\theta$, rational 
or irrational. 

Since the nontrivial Chern-Connes character vanishes on all generators of $K_0$ group 
apart from the nontrivial one (which is called a Bott class in \cite{ELPW} and Fourier 
module by Walters in \cite{Walt00}). For our purpose, the crucial information is the behavior of this character
under the action of $\hat{\beta}$. We have:

\begin{lemma}
For any $N=2,3,4,6$, the Chern-Connes character induced by the cyclic 2-cocycle over 
$C(\T^2_\theta)$ is invariant under the action of $\hat{\beta}$. 
\end{lemma}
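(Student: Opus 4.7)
The plan is to verify the invariance directly, using the explicit action of $\hat{\beta}_*$ on the $K_0$-generators already computed in Lemmas \ref{lemz2}--\ref{lemz6}, together with the observation (recorded in the discussion preceding the lemma, and visible for $N=2$ in the last column of Table \ref{z2traces}) that, for every $N=2,3,4,6$, the cyclic 2-cocycle over $C(\T^2_\theta)$ pairs nontrivially with only one of the listed generators of $K_0(C(\T^2_\theta) \lcross \Z_N)$, namely the exotic module $[{\cal M}_N]$, on which it takes the value $1$. Consequently the induced character $\chi$ is nothing but the functional picking out the coefficient of $[{\cal M}_N]$ when an element of $K_0$ is expanded in the chosen basis.

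The decisive observation is now purely matricial: in each of the four propositions proving the $K$-theory of $\mathrm{B}N_\theta$, the row of $\id - \hat{\beta}_*$ corresponding to $[{\cal M}_N]$ (the last row of every displayed matrix) is identically zero. Equivalently, Lemmas \ref{lemz2}--\ref{lemz6} express $\hat{\beta}_*([{\cal M}_N])$ as $[{\cal M}_N]+\Delta$ with $\Delta$ a $\Z$-linear combination of non-exotic generators, and for every non-exotic basis element $x$ the image $\hat{\beta}_*(x)$ lies in the span of the non-exotic generators alone. Since $\chi$ annihilates every non-exotic generator, this yields $\chi(\hat{\beta}_*(x)) = \chi(x)$ on every basis element, and hence on all of $K_0$ by $\Z$-linearity.

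The only point that requires more than bookkeeping is to justify that the cyclic 2-cocycle really does extend to the crossed product and that its pairing with the non-exotic generators still vanishes for every value of $\theta$ (rational as well as irrational, in view of Section 4). This, however, is exactly what the paragraph preceding the lemma supplies, building on the results of Walters and Buck--Walters and on \cite{ELPW}, so the verification reduces to reading off the relevant entries of the matrices in the preceding subsections. I do not anticipate any further obstacle.
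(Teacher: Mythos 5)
Your argument runs in the opposite direction from the paper's, and in the setting where the lemma is actually needed it is circular. The paper proves the invariance \emph{a priori}, in one line: the automorphism $\hat{\beta}$ restricts to the identity on $C(\T^2_\theta)$ (it only rescales the generator $e$ of $\Z_N$), and the cyclic 2-cocycle is built from the canonical trace and the standard derivations of the smooth subalgebra of $C(\T^2_\theta)$, all of which are manifestly $\hat{\beta}$-invariant; hence the induced character on the crossed product is invariant. The paper then \emph{uses} this invariance to conclude that $\hat{\beta}_*[{\cal M}_N]$ must equal $[{\cal M}_N]$ plus a combination of the remaining generators --- i.e.\ the zero last row of the matrices is a \emph{consequence} of the lemma, cited afterwards as a consistency check. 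You instead take those matrix entries as given and read the invariance off them.

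That verification is legitimate only where the $[{\cal M}_N]$-component of $\hat{\beta}_*[{\cal M}_N]$ was determined without appeal to the cocycle --- essentially the irrational case, where the canonical trace $\tau$ (which is $\hat{\beta}$-invariant and takes an irrational value precisely on the exotic module) already pins down that coefficient. But the lemma sits in Section 4, whose whole point is that for rational $\theta$ the traces are no longer injective on $K_0$ and one needs the cyclic 2-cocycle to separate $[{\cal M}_N]$ from the other generators; there, knowing how the character transforms under $\hat{\beta}$ is an \emph{input} to computing the exotic row of $\id-\hat{\beta}_*$, not an output. Your closing paragraph defers exactly this point to ``the paragraph preceding the lemma,'' but that paragraph only explains why the lemma is needed --- it does not prove it. The missing idea is the direct structural argument: invariance of the cocycle follows from $\hat{\beta}|_{C(\T^2_\theta)} = \id$ together with the invariance of the trace and derivations, with no reference to the $K_0$-computations at all.
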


The proof is trivial: since the action of $\beta$ does not change $C(\T^2_\theta)$ and the trace
on it as well as derivations are invariant, so must be the nontrivial Chern-Connes character. 
Therefore, $\hat{\beta}_*$ of the nontrivial generator of $K_0$ group (which we called ${\cal M}_N$
for $N=2,3,4,6$ must be a sum of ${\cal M}_N$ with a combination of remaining generators, as
 is clearly the case in proof of proposition \ref{lemz2} and lemmas \ref{lemz3}, \ref{lemz4}, \ref{lemz6}.
 
\section{Conclusions}

The Bieberbach manifolds as well as their noncommutative versions are quite interesting
objects and a good testing ground for tools of noncommutative geometry \cite{OlSi}. 
The computations presented above are first results on their $K$-theory groups and use 
techniques developed to study noncomutative tori and their crossed product by the action 
of a cyclic group. Although a noncommutative torus is a Rieffel-type deformation of the
classical object, the noncommutative Bieberbachs are not. Therefore, the result that their
$K$-theory groups are the same in the deformed and undeformed case is not trivial.

The result itself is not entirely surprising: the $K_0$ groups have torsion, although the exact
form of the torsion component as well as the fact that n the $\Z_6$ case there is no torsion
cannot be seen at once. It is a remarkable fact that there exists a striking relation 
between the $K_0$ groups of the manifolds $\mathrm{BN}$ ($N=2,3,4,6$) and the first homology 
groups of the corresponding infinite Bieberbach groups $G_N$ \cite{HJKL93}, (so that 
$\mathrm{BN} =\R^3/G_N$), namely $K_0(\mathrm{BN}) \sim \Z \oplus H_1(G_N,\Z)$.  This fact, as well the remaining case of nonorientable manifolds, together with the study of spectral geometries and spin structures over noncommutative Bieberbach manifolds shall be discussed in our future work.
%%%%%%%%%%%%%%%%%%%%%%%%%%%%%%%%%%%%%%%%%%%%%%%%%%%%%%%%%%%%%%%%%%%
%%%%%%%{\bf Acknowledgements:}
%%%%%%%%%%%%%%%%%%%%%%%%%%%%%%%%%%%%%%%%%%%%%%%%%%%%%%%%%%%%%%%%%%%
 
\end{document}